\def\ov{\overline} 
\def\l{\langle} \def\r{\rangle}
\def\ZZ{{\sf Z}}
\def\Aut{{\sf Aut}} \def\Inn{{\sf Inn}}\def\Hom{{\sf Hom}}
\def\Out{{\sf Out}}
\def\Cos{{\sf Cos}}
\def\Cay{{\sf Cay}} 
\def\D{{\rm D}} \def\Q{{\rm Q}}
\def\S{{\rm S}}
\def\C{{\bf C}}\def\N{{\bf N}}\def\Z{{\bf Z}}
\def\O{{\bf O}}
\def\Ga{{\it \Gamma}}
\def\a{\alpha}   \def\s{\sigma}
 \def\GL{{\rm GL}}
\def\A{{\rm A}}
\def\PSL{{\rm PSL}}  \def\PGL{{\rm PGL}}
\def\GL{{\rm GL}} \def\SL{{\rm SL}}
  \def\D{{\rm D}}
\def\calM{{\mathcal M}}
\def\Fit{{\rm Fit}}
\def\vs{\vskip0.03in}
\def\le{\leqslant}
\def\ge{\geqslant}
\newtheorem{theorem}{Theorem}[section]%
\newtheorem{lemma}[theorem]{Lemma}%
\newtheorem{corollary}[theorem]{Corollary}%
\newtheorem{proposition}[theorem]{Proposition}%
\newtheorem{construction}[theorem]{Construction}%
\newtheorem{hypothesis}[theorem]{Hypothesis}%
\def\qed{{\hfill$\Box$\smallskip}
\medbreak}
\begin{document}

\title[Groups of square-free characteristic]{Finite groups and arc-transitive maps of square-free Euler characteristic}
\thanks{The project was partially supported by the NNSF of China (11931005).}

\author{Peice Hua}
\address{
SUSTech International Center for Mathematics\\
Southern  University of Science and Technology,  Shenzhen 518055,  P.R.China}
\email{huapc@pku.edu.cn}

\author{Cai Heng Li}
\address{Department of Mathematics \\
SUSTech International Center for Mathematics\\
Southern  University of Science and Technology,  Shenzhen 518055,  P.R.China}
\email{lich@sustech.edu.cn}

\author{Jia Ben Zhang}
\address{Department of Mathematics \\
	SUSTech International Center for Mathematics\\
	Southern  University of Science and Technology,  Shenzhen 518055,  P.R.China}
\email{jiabenzhang@qq.com}

\author{Hui Zhou}
\address{School of Mathematics and Statistics \\
	Hainan University,  Haikou 570228, P.R.China}
\email{zhouh06@qq.com}

\begin{abstract}
A characterization is completed for finite groups acting arc-transitively on maps with square-free Euler characteristic, associated with infinite families of regular maps of square-free Euler characteristic presented.
This is based on a classification of finite groups of which each Sylow subgroup has a cyclic or dihedral subgroup of prime index.

\vskip0.2in
\noindent\textit{Keywords:} solvable group, arc-transitive map, regular map, Euler characteristic
\end{abstract}

\maketitle

\date\today

\section{Introduction}

Edge-transitive maps are categorized into fourteen classes according to local structures and local actions in \cite{GW,ST}, among which five are arc-transitive.
The problem of constructing and classifying important classes of such maps with specific Euler characteristic has attracted considerable attention; refer to \cite{CNS}\,-\,\cite{bi-rotary} for the Euler characteristic to be a negative prime or product of two primes.
We aim towards to a classification of arc-transitive maps with square-free Euler characteristic.

Assume that $ G $ is a finite group acting edge-transitively on a map with square-free Euler characteristic, which is said to be {\it of square-free Euler characteristic} for convenience.
Then $G$ has the following restricted property (Lemma~\ref{Sylow-metac}):
\begin{quote}
{\it each Sylow subgroup has a cyclic or dihedral subgroup of prime index.}
\end{quote}

It leads us to studying groups of square-free Euler characteristic, which in fact extends the characterization of {\it almost-Sylow cyclic groups}, namely, each Sylow subgroup of odd order is cyclic, and the Sylow $ 2 $-subgroup either is trivial or contains a cyclic subgroup of index 2; refer to Zassenhaus \cite{ASC3}, Suzuki \cite{ASC1} and Wong \cite{ASC2}.

In previous work \cite{HLZZ}, we characterized non-solvable groups of square-free Euler characteristic, and in \cite{HUAPC}, we determined $ 2 $-groups of square-free Euler characteristic. As the third paper of this series, the purpose of this paper is to present a classification (characterization) of such groups in the solvable case.

As usual, for a group $G$ and a subset $\pi$ of prime divisors of $|G|$, denote by $G_\pi$ a Hall $\pi$-subgroup of $G$, which is a Sylow subgroup if $\pi$ contains a single prime $p$, and denoted by $G_p$.
For a positive integer $n$, denote by $\ZZ_n$ the cyclic group of order $n$, by $\D_{2n}$ or $\Q_{2n}$ the dihedral or generalized quaternion group of order $2n$, and by $ \A_n $ or $ \S_n $ the alternating or symmetric group of degree $ n $, respectively.
A finite group $G$ is a {\it semidirect product} of $A$ by $B$ if $A\lhd G$ and $B<G$ are such that $G/A\cong B$, denoted by $G=A{:}B$. Denote by $A\circ_C B$ the \textit{central product} of $ A $ and $ B $ (find the definition in Preliminaries).
Our first main result is the following theorem.

\begin{theorem}\label{thm:sfEul-gps}
Let $G$ be a finite solvable group such that each Sylow subgroup of $G$ has a cyclic or dihedral subgroup of prime index.
Then{\rm\,:}

{\rm(1)} The Sylow subgroups $ G_2 $ and $ G_p\  (p>2) $  of $ G $ are respectively given in {\rm Proposition \ref{2-gp}} and {\rm Lemma \ref{p-gp}}.

{\rm(2)} Let $ H\le G $ be the largest normal Hall subgroup of odd order. Then, $ H=A{:}B $, where $A$ is abelian, $ B $ is nilpotent and $ A\cap\Z(H)=1$. 

{\rm(3)} $ G=H{:}K $, where $ K $ satisfies one of the following{\rm\,:}
	\begin{itemize}
		\item[(i)] $K=1$ or $G_2\,;$\vs
		\item[(ii)] $ K=G_{\{2,3\}}$ or $ G_{\{2,7\}} $ is one of the groups{\rm\,:}
		\[\ZZ_2^2{:}G_3,\ \ZZ_2\times(\ZZ_2^2{:}G_3),\ \Q_8{:}G_3,\ \ZZ_4\circ_{\ZZ_2}(\Q_8{:}G_3),\,\ZZ_2^3{:}G_7\,;\]
		\item[(iii)] $K=G_{\{2,3\}}$ is one of the groups{\rm\,:}
		\[\mbox{$ X $, $ \ZZ_2\times X $, $ Y $, $ \ZZ_4\circ_{\ZZ_2} Y $, $ W $,}\]
		where $ X,Y,W $ are given in {\rm Table \ref{tab-main}}, $ X $ is homomorphic to $\ZZ_2^2{:}\S_3\cong\S_4$, $ Y $ is homomorphic to $\Q_8{:}\S_3\cong\GL_2(3)$, and $ W $ is homomorphic to $ \ZZ_2^2{:}\ZZ_3\cong\A_4 $, $ \S_4 $, $\ZZ_4\circ\SL_2(3)$ or $\ZZ_4\circ\GL_2(3);$
	    \begin{table}[h]
	    \centering
	    \renewcommand{\arraystretch}{1.5}
	    \caption{$ X,Y,W $, $ \ell\ge 1 $}\label{tab-main}
	    \scalebox{0.85}{
	    \begin{tabular}{|c|c|c|cc|}
	    	\hline
	    	{\rm Row} & $ X $ & $ Y $ & \multicolumn{2}{c|}{$ W $}\\
	    	\hline
	    	
	    	$1$ & $\ZZ_2^2{:}\D_{2.3^{\ell}}$ & $\Q_8{:}\D_{2.3^{\ell}}$ & $ \D_{2.3^{\ell}}\times\A_4$, &
	    	$ (\ZZ_{3^{\ell}}{:}\ZZ_4)\circ(\Q_8{:}\ZZ_3) $ \\
	    			
	    	$2$ &$\ZZ_{3^{\ell}}
	    	\times \S_4$ & $\ZZ_{3^{\ell}}
	    	\times \GL_2(3)$ & $ (\ZZ_2^2{:}\ZZ_{3^{\ell}})\times\D_6$, &
	    	$ (\Q_8{:}\ZZ_{3^{\ell}})\circ(\ZZ_3{:}\ZZ_4) $
	    	\\
	    			
	    	$3$ & $(\ZZ_2^2{:}\D_{2.3^{\ell}})\times\ZZ_3$ & $ (\Q_8{:}\D_{2.3^{\ell}})\times\ZZ_3$ & $\D_{2.3^{\ell}}\times \S_4$, & $ (\ZZ_{3^{\ell}}{:}\ZZ_4)\circ (\Q_8{:}\S_3) $ \\
	    			
	    	$4$ & $(\ZZ_{3^{\ell}}\times \A_4){:}\ZZ_2$ & $(\ZZ_{3^{\ell}}\times (\Q_8{:}\ZZ_3)){:}\ZZ_2$ & $(\ZZ_2^2{:}{\D_{2.3^{\ell}}})\times \D_6$, & $ (\Q_8{:}{\D_{2.3^{\ell}}})\circ(\ZZ_3{:}\ZZ_4) $ \\
	    			
	    	$5$ & $((\ZZ_2^2{:}\ZZ_{3^{\ell}})\times\ZZ_{3}){:}\ZZ_2$ & $((\Q_8{:}\ZZ_{3^{\ell}})\times\ZZ_{3}){:}\ZZ_2$ & $( \ZZ_2^2\times\D_{2.3^{\ell+1}}){:}\ZZ_3$, & $ (\Q_8\circ(\ZZ_{3^{\ell+1}}{:}\ZZ_4) ){:}\ZZ_3 $ \\
	    			
	    	$6$ &$\ZZ_2^2 {:}(\ZZ_{3^{\ell+1}}{:}\ZZ_6)$ & $\Q_8 {:}(\ZZ_{3^{\ell+1}}{:}\ZZ_6)$ & &  \\
	    	\hline
	   \end{tabular}}
	  \end{table}
    \item[\rm(iv)] $K=G_{\{2,3,7\}}=\ZZ_2^3{:}(G_7{:}G_3).$\vs
	\end{itemize}
\end{theorem}

The next theorem shows that arc-transitive automorphism groups of maps with square-free Euler characteristic are more restricted.

\begin{theorem}\label{thm:arctransmap-gps}
Let $\calM$ be a map with square-free Euler characteristic, and let $G\le\Aut\calM$ be solvable and arc-transitive on $ \calM $.
Then, $G=(A{:}B){:}K$, where $A$ is abelian, $ B $ is nilpotent, $ A\cap\Z(A{:}B)=1 $, and one of the following holds{\rm\,:}

\begin{itemize}
	\item[\rm(1)] $ \calM $ is $ G $-arc-transitive, $ K=G_2 $ is $ \ZZ_2 $ or one of the groups given in {\rm Proposition \ref{2-gp-arc}}.\vs
	
	\item[\rm(2)] $ \calM $ is $ G $-regular, $ K=G_{\{2,3\}} $ is one of the groups{\rm\,:} \[\mbox{$ X $, \ $X\times\ZZ_2$, \ $\D_{2.3^{\ell}}\times\S_4$,\  $(\ZZ_2^2{:}\D_{2.3^{\ell}})\times\D_6$,}\] where $X\in\{\ZZ_2^2{:}\D_{2.3^{\ell}},\,(\ZZ_{3^{\ell}}\times \A_4){:}\ZZ_2,\,((\ZZ_2^2{:}\ZZ_{3^{\ell}})\times\ZZ_{3}){:}\ZZ_2\}.$\vs
	
	\item[\rm(3)] $ \calM $ is $ G $-vertex-reversing, $ K=G_{\{2,3\}} $ is either one of the groups given in {\rm (2)}, or one of the groups{\rm\,:} \[\mbox{$ Y $, \ $Y\circ_{\ZZ_2}\ZZ_4$, \ $ (\ZZ_{3^{\ell}}{:}\ZZ_4)\circ(\Q_8{:}\S_3) $,\
	$ (\Q_8{:}\D_{2.3^{\ell}})\circ(\ZZ_3{:}\ZZ_4) $,}\] where $Y\in\{ \Q_8{:}\D_{2.3^{\ell}},\, (\ZZ_{3^{\ell}}\times (\Q_8{:}\ZZ_3)){:}\ZZ_2,\, (\ZZ_{3}\times (\Q_8{:}\ZZ_{3^{\ell}})){:}\ZZ_2\}.$\vs\vs
	
	\item[\rm (4)] $ \calM $ is $ G $-vertex-rotary, and either $ K=G_{\{2,3\}}$ or $G_{\{2,7\}}$ is one of the groups{\rm\,:} \[\mbox{$\ZZ_{2}^2{:}\ZZ_{3^\ell}$,\, $ \ZZ_2\times(\ZZ_{2}^2{:}\ZZ_{3^\ell} )$,\, $\ZZ_4\circ_{\ZZ_2}(\Q_8{:}\ZZ_{3^\ell})$, \,$ \ZZ_2^3{:}\ZZ_{7^\ell} ${\rm\,;}}\]
	or $K=G_{\{2,3\}}$ is one of the groups given in {\rm Theorem \ref{thm:sfEul-gps}~(iii)}, excluding the following four{\rm\,:} \[\mbox{$(\ZZ_{3^{\ell}}\times \A_4){:}\ZZ_2$,\, $(\ZZ_{3}\times (\ZZ_2^2{:}\ZZ_{3^{\ell}})){:}\ZZ_2$,\, $(\ZZ_{3^{\ell}}\times (\Q_8{:}\ZZ_3)){:}\ZZ_2$,\, $(\ZZ_{3}\times (\Q_8{:}\ZZ_{3^{\ell}})){:}\ZZ_2$.}\]
\end{itemize}
\end{theorem}

The proofs of the above two results are respectively given in Section 4\,-\,5 and Section 6. We remark that, Theorem \ref{thm:arctransmap-gps} does not guarantee the existence of arc-transitive maps $ \calM $ corresponding to each of these groups. The existence and construction of such maps is the focus of the next paper in this series. Here, we construct several interesting examples of regular maps with square-free
Euler characteristic in Section 3, from which the following result follows.

\begin{theorem}\label{thm:reg-maps}
There are infinite families of regular maps which have square-free Euler characteristic of form $ -n(n-2) $ or $ -n(n-3) $, with $ n $ an positive integer.
\end{theorem}

\section{Preliminaries}\label{sec:pre}

Let $G$ be a finite group, and let $ H\leqslant G $ be a subgroup of $ G $. Denote by $G'$ the {\it commutator subgroup} of $G$, and by $\N_G(H)$ and $\C_G(H)$ the \textit{normalizer} and the \textit{centralizer} of $H$ in $G$, respectively. If $G$ is solvable, it is well-known that, for any subset $\pi$ of prime divisors of $|G|$, there exist Hall $ \pi $-subgroups of $ G $ with any two of them conjugate in $ G $, and any $ \pi $-subgroup is contained in some Hall $ \pi $-subgroup.

Let $A,B$ be finite groups whose centers $\Z(A),\Z(B)$ have isomorphic subgroups $C_1,C_2$, respectively.
Let $\varphi$ be an isomorphism from $C_1$ to $C_2$, and let $C=\{(c,c^\varphi)\mid c\in C_1\}$. Then $C$ is a normal subgroup of $A\times B$. The factor group $(A\times B)/C$ is called a {\it central product} of $A$ and $B$, denoted by $A\circ_CB$, and sometimes simply by $A\circ B$ if $C$ is equal to $\Z(A)$ or $\Z(B)$.\vs

Let $\Ga,\Sigma$ be two graphs and $\lambda$ be a positive integer. Denote by $\Ga^{(\lambda)}$ the graph with each edge of $\Ga$ replaced by $\lambda$ edges. Denote by $ \Ga\square\Sigma $ the \textit{Cartesian product} of $\Ga$ and $\Sigma$, which has vertex set $ V(\Ga)\times V(\Sigma) $, where $ (x_1,y_1) $ is adjacent to $ (x_2,y_2) $ if and only if $ x_1=y_1 $ and $ x_2 $ is adjacent to $ y_2 $, or $ x_1 $ is adjacent to $ y_1 $ and $ x_2=y_2 $.\vs

It is known (refer to \cite{RotaMap}) that an arc-transitive (multi-) graph $\Ga$ can be expressed as a coset graph\,:
\[\Ga=\Cos(G,H,J),\ \mbox{or}\ \Cos(G,H,HgH)\ \mbox{in the simple graph case},\]
which has vertex set $[G:H]$ and edge set $[G:J]$, such that \begin{itemize}
	\item[-] $Hg_1$ is incident with $Jg_0$ if and only if $g_1g_0^{-1}\in HJ$; or equivalently,\vs
	\item[-] $Hg_1$, $Hg_2$ are adjacent if and only if $ g_2g_1^{-1}\in HgH $ in the simple graph case.
\end{itemize}
Further, $ \Ga $ is of valency $|H:H\cap H^g|$, where $g\in J\setminus (H\cap J)$, and $ \Ga $ is connected if and only if $ G=\l H,J\r $.\vs

A \textit{map} $ \calM=(V,E,F) $ is a $ 2 $-cell embedding of a graph into a closed surface, with vertex set $V$, edge set $E$ and face set $F$. The {\it Euler characteristic} of $\calM$ is given by Euler formula\,:
 \[\chi(\calM)=|V|-|E|+|F|.\]

A \textit{flag} of $\calM$ is an incidence triple $(\a,e,f)$ with $\a\in V$, $e\in E$ and $f\in F$. Let $ G\le\Aut(\calM) $, and let $ (\a,e,f) $ be a flag. It is well-known that, each of the stabilizers $G_\a$ and $G_f$ is cyclic or dihedral, and meanwhile, $G_e=1$, $\ZZ_2$ or $\ZZ_2^2$.\vs

A map $ \calM $ is called a \textit{$ G $-arc-transitive} map, if $ G $ acts transitively on arcs (directed
edges) of $ \calM $. Arc-transitive maps are divided into five types in \cite{GW,ST}, for which
recent papers \cite{RevMap,RotaMap} provide a new description: letting $ \calM $ be a $ G $-arc-transitive
map, then one of the following holds\,:

(1) $ \calM $ is a \textit{$ G $-regular} map (type 1), which is determined by a \textit{regular triple} $ (x,y,z) $ for $ G $ satisfying\,: \[\mbox{$G=\l x,y,z\r$, $ |x|=|y|=|z|=2 $ and $ \l x,z\r=\ZZ_2^2 $}.\] In this case, $G=\Aut(\calM)$, the stabilizers $G_\a=\l x,y\r,\, G_e=\l x,z\r,\,G_f=\l y,z\r,$ and the underlying graph $\Ga$ is expressed as the coset graph $\Cos(G,H,HJ) $, where $ H=\l x,y\r $, $ J=\l x,z\r $, so $ \Ga $ is connected, of valency $ |H:H\cap H^z| $.\vs

(2) $ \calM $ is a \textit{$ G $-vertex-reversing} map (type $ 2^\ast $ or $ 2^P $), which is determined by a \textit{reversing triple} $(x,y,z)$ for $ G $ satisfying\,: $G=\l x,y,z\r$ and $ |x|=|y|=|z|=2 $.\vs

(3) $ \calM $ is a \textit{$ G $-vertex-rotary} map (type $ 2^*{\rm ex} $ or $ 2^P{\rm ex} $), which is determined by a \textit{rotary pair} $ (\a,z) $ for $ G $ satisfying\,: $G=\l \a,z\r$ and $|z|=2$.\vs\vs

The following lemma established in \cite{HLZZ} characterizes Sylow subgroups of $\Aut(\calM)$ in the case where $\calM$ is with square-free characteristic. Note that, if $\chi(\calM)$ is square-free, then of course so is $\gcd(\chi(\calM),|\Aut\calM|)$.

\begin{lemma}{\rm(\cite[Lem.\,2.2]{HLZZ})}\label{Sylow-metac}
Let $\calM$ be a map such that $\gcd(\chi(\calM),|\Aut(\calM)|)$ is square-free.
Then each Sylow $p$-subgroup of $\Aut(\calM)$ has a cyclic or dihedral subgroup of index $p$.
\end{lemma}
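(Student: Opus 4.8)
The plan is to exploit the classical Euler-characteristic/order relation for maps together with the known structure of flag stabilizers. Fix a prime $p$ dividing $|\Aut(\calM)|$ and let $P$ be a Sylow $p$-subgroup; write $A=\Aut(\calM)$. The key input is the standard counting identity coming from the flag action: if $(\a,e,f)$ is a flag, then $|A|$ is a known multiple of $|A_\a|\,|A_e|\,|A_f|$-type quantities, and $\chi(\calM)=|V|-|E|+|F|$ can be rewritten, using $|V|=|A:A_\a|$, $|E|=|A:A_e|$, $|F|=|A:A_f|$ (on each orbit), as $\chi(\calM)=|A|\bigl(\tfrac1{|A_\a|}-\tfrac1{|A_e|}+\tfrac1{|A_f|}\bigr)$ summed over the relevant orbits. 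Since $A_e\in\{1,\ZZ_2,\D_4\}$ and $A_\a,A_f$ are cyclic or dihedral, the $p$-part of the bracketed expression is controlled, and one deduces that $|P|$ divides $p\cdot\gcd(\chi(\calM),|A|)\cdot(\text{something coprime to }p)$ up to a bounded factor; combined with $\gcd(\chi(\calM),|A|)$ being square-free this forces $|P:P\cap A_\a|$ or $|P:P\cap A_f|$ (whichever is the relevant vertex/face stabilizer) to be at most $p$.

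First I would recall/cite the precise arithmetic: for a $G$-arc-transitive (or more generally edge-transitive) map, there is a formula expressing $\chi(\calM)$ in terms of $|G|$ and the orders of the local stabilizers, so that $\frac{|G|}{\chi(\calM)}$ is, up to small explicit constants, of the form $\frac{|G_e|\,|G_\a|\,|G_f|}{|G_\a||G_f|-|G_e||G_f|+|G_e||G_\a|}$ or similar. The essential point is that a Sylow $p$-subgroup of $G$ meets some vertex or face stabilizer $G_\a$ in a subgroup of $p$-power index bounded by the $p$-part of $\chi(\calM)$, which is at most $p$ because $\chi(\calM)$ is square-free. Then, since $G_\a$ (or $G_f$) is cyclic or dihedral, a Sylow $p$-subgroup of it is cyclic (for $p$ odd) or cyclic/dihedral/quaternion (for $p=2$); intersecting with $P$ and using that the index is $\le p$, one gets that $P$ has a cyclic or dihedral subgroup of index $p$. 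I would handle the odd-$p$ case and the $p=2$ case separately, since for $p=2$ the Sylow $2$-subgroup of a dihedral group is dihedral (not cyclic), which is exactly why the conclusion allows ``cyclic or dihedral.''

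The main obstacle I anticipate is bookkeeping the orbit structure when the map is edge-transitive but not arc-transitive (so that $G_e$ may be $\ZZ_2$ rather than $\D_4$ and the vertex/face orbits may split), and keeping the explicit constants (factors of $2$ coming from the index $|\Aut\calM:G|$ when $G$ is merely a subgroup, and from $|G_e|$) from spoiling the ``index $p$'' conclusion. In other words, the delicate part is arguing that the extraneous coprime-to-$p$ factors and the single bounded factor of $p$ combine to give exactly index $p$ and not $p^2$; this is where square-freeness of $\gcd(\chi(\calM),|\Aut(\calM)|)$ must be used with care rather than just square-freeness of $\chi(\calM)$. Once that arithmetic is pinned down, the group-theoretic step identifying the structure of $P$ from knowing $|P:P\cap C|\le p$ with $C$ cyclic or dihedral is routine.
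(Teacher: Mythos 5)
First, note that the paper itself does not prove this lemma: it is quoted from \cite[Lem.~2.2]{HLZZ}, so there is no in-paper argument to measure yours against. Your overall strategy --- exploit that vertex/face stabilizers are cyclic or dihedral and edge stabilizers are $1$, $\ZZ_2$ or $\D_4$, and let square-freeness of $\gcd(\chi(\calM),|\Aut\calM|)$ force a Sylow $p$-subgroup $P$ to meet some local stabilizer in index at most $p$ --- is the right one, and your concluding group-theoretic step is essentially fine (though note a $2$-subgroup of a dihedral group is cyclic or dihedral, never generalized quaternion, so that case does not arise).

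However, the arithmetic core is not actually established, and the route you sketch does not work as written. The identity $\chi(\calM)=|A|\bigl(\tfrac1{|A_\a|}-\tfrac1{|A_e|}+\tfrac1{|A_f|}\bigr)$ presupposes that $A=\Aut(\calM)$ is transitive on vertices, edges and faces, whereas the lemma carries no transitivity hypothesis at all; and even granting transitivity, the step where ``the $p$-part of the bracketed expression is controlled'' so that $|P|$ divides $p\cdot\gcd(\chi,|A|)$ times bounded coprime factors is precisely the assertion to be proved, not a computation you have carried out. The clean way to close the gap is to bypass the global formula entirely: assume $|P|\geqslant p^2$ (otherwise the conclusion is trivial) and let $P$ act on $V\cup E\cup F$. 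Every $P$-orbit has size $|P:P_x|$, a power of $p$, with $P_x=P\cap A_x$ a $p$-subgroup of a cyclic or dihedral group (or of $\D_4$), hence itself cyclic or dihedral. If every such index were at least $p^2$, then $\chi(\calM)=|V|-|E|+|F|$ would be divisible by $p^2$; since also $p^2\mid |A|$, this contradicts square-freeness of $\gcd(\chi(\calM),|A|)$. Hence some $P_x$ has index $1$ or $p$ in $P$, which gives the conclusion. Note finally that the small orbit may be an edge orbit, a case your sketch omits: there $P_x\leqslant\D_4$, so $|P|\leqslant 4p$, and for $p=2$ one checks directly that every group of order at most $8$ has a cyclic or dihedral subgroup of index $2$.
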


We thus study finite groups satisfying the following hypothesis.

\begin{hypothesis}\label{hypo-0}
{\rm Let $G$ be a finite group such that each Sylow subgroup of $G$ has a cyclic or dihedral subgroup of prime index.}
\end{hypothesis}

The next lemma plays a key role in the analysis of groups satisfying Hypothesis~$\ref{hypo-0}$.

\begin{lemma}{\rm(\cite[Lem.\,2.5]{HLZZ})}\label{class-closed}
Let a finite group $G$ satisfy {\rm Hypothesis~$\ref{hypo-0}$}.
Then so does each subgroup and each factor group of $ G $.
\end{lemma}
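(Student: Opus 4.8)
The plan is to derive both closure statements from two elementary facts about abstract finite groups: (i) every subgroup of a cyclic group is cyclic, and every subgroup of a dihedral group is cyclic or dihedral; and (ii) every homomorphic image of a cyclic group is cyclic, and every homomorphic image of a dihedral group is cyclic or dihedral. Fact (i) is classical. For (ii) one notes that, writing $\D_{2n}=\l r,s\r$ with $|r|=n$, the proper nontrivial normal subgroups of $\D_{2n}$ are the subgroups $\l r^d\r$ for $d\mid n$ together with, when $n$ is even, the two dihedral subgroups of index $2$; the resulting quotients are $\D_{2n}/\l r^d\r\cong\D_{2d}$ and $\cong\ZZ_2$, so they are all cyclic or dihedral, provided---as we shall assume throughout---that $\ZZ_2\cong\D_2$ and the Klein four-group $\ZZ_2^2\cong\D_4$ are counted as (degenerate) dihedral groups. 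We also record the trivial observation that a $p$-group which is cyclic or dihedral and has order greater than $p$ has a cyclic or dihedral subgroup of index $p$ (for the dihedral case with $p=2$, use the cyclic subgroup of index $2$); hence, for a $p$-group, the property in Hypothesis~\ref{hypo-0} is equivalent to being trivial or having a cyclic or dihedral subgroup of index $p$.

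Now let $H\le G$ and fix a prime $p$. Choose a Sylow $p$-subgroup $P$ of $H$ and a Sylow $p$-subgroup $Q$ of $G$ with $P\le Q$. If $Q=1$ there is nothing to prove; otherwise Hypothesis~\ref{hypo-0} provides a cyclic or dihedral subgroup $C\le Q$ with $[Q:C]=p$, and $C$ is maximal in $Q$, hence normal in $Q$. Then $PC$ is a subgroup of $Q$ and $P/(P\cap C)\cong PC/C\le Q/C\cong\ZZ_p$, so $[P:P\cap C]\in\{1,p\}$, while $P\cap C\le C$ is cyclic or dihedral by (i). If $[P:P\cap C]=p$, then $P$ has the cyclic or dihedral subgroup $P\cap C$ of index $p$. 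If instead $P=P\cap C\le C$, then $P$ itself is cyclic or dihedral, and by the observation above it has a cyclic or dihedral subgroup of index $p$ unless $|P|\le p$, in which case $P$ is trivial or of prime order; either way $P$ satisfies the required condition.

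The factor group case is parallel. A Sylow $p$-subgroup of $G/N$ has the form $QN/N\cong Q/M$ with $Q$ a Sylow $p$-subgroup of $G$ and $M=Q\cap N\lhd Q$. If $Q=1$ there is nothing to prove; otherwise choose $C\le Q$ cyclic or dihedral with $[Q:C]=p$. Since $M\lhd Q$, the product $CM$ is a subgroup with $C\le CM\le Q$, so $CM/M\lhd Q/M$ and $(Q/M)/(CM/M)\cong Q/CM$ is a homomorphic image of $Q/C\cong\ZZ_p$; thus $[Q/M:CM/M]\in\{1,p\}$, while $CM/M\cong C/(C\cap M)$ is a homomorphic image of $C$ and so is cyclic or dihedral by (ii). As before, either $CM/M$ is a cyclic or dihedral subgroup of index $p$ in $Q/M$, or $Q/M$ is itself cyclic or dihedral and hence satisfies the condition. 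This proves the lemma.

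The argument involves no serious difficulty; the only points requiring attention are the determination of the normal subgroups and quotients of a dihedral group used in (ii), and a consistent treatment of the small degenerate cases---trivial Sylow subgroups, Sylow subgroups of order at most $p$, and the identifications $\D_2\cong\ZZ_2$ and $\D_4\cong\ZZ_2^2$---so that the phrase ``has a cyclic or dihedral subgroup of prime index'' is interpreted in one and the same way for the group $G$, its subgroups, and its factor groups.
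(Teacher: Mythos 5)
Your argument is correct and complete. Note that the paper does not actually prove this lemma here --- it imports it from \cite[Lem.\,2.5]{HLZZ} --- so there is no in-text proof to compare against; your write-up is the standard argument one would expect that reference to contain (Sylow subgroups of $H\le G$ embed in Sylow subgroups of $G$, Sylow subgroups of $G/N$ are quotients $Q/(Q\cap N)$ of Sylow subgroups of $G$, and the class of cyclic-or-dihedral groups is closed under subgroups and quotients), and your explicit handling of the degenerate cases ($P$ trivial or of order $p$, and the identifications $\D_2\cong\ZZ_2$, $\D_4\cong\ZZ_2^2$, which the paper itself uses, e.g.\ in listing $G_e=\D_4$ and $\D_{2^{\ell+1}}\times\ZZ_2$ with $\ell=1$) is exactly the care needed to make the statement literally true.
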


\section{Examples of Square-free Euler Characteristic Regular Maps}\label{sec:ex}

We construct three infinite families of regular maps with square-free Euler characteristic in this section.
The constructions are based on the following groups:
\[\begin{array}{lllll}
	X & = & (\l a,s\r\times\l b,t\r){:}\l\s\r\cong \D_{2n}\wr\S_2 &\ \ \ &\\
	& = & \l a,b\r{:}\l s,\s\r\cong \ZZ_n^2{:}\D_8,&\ \ \ &(\ast)
\end{array}\]
where $\l a,s\r=\l a\r{:}\l s\r=\D_{2n}$, $\l b,t\r=\l b\r{:}\l t\r=\D_{2n}$, $(a,s)^\s=(b,t)$ and $|\s|=2$.\vs

The first family is given below.

\begin{construction}\label{cons:sol-1}
{\rm Let $ X $ be the group given in ($ \ast $), and let $G=\l a,s\r\times\l b,t\r=\D_{2n}\times\D_{2n}\le X$. Assume that $n$ is odd. Define $ x,y,z $ as follows\,: \[x=s,\ y=abst,\ z=st.\]
Then, $(x,y,z)$ is a triple of involutions such that $ xz=zx $. Note that, \[\mbox{$ (yx)^2=(abt)^2=a^2 $,\ \  $ (yxz)^2=(abs)^2=b^2 $,}\] and meanwhile, $ \l a\r=\l a^2\r$, $ \l b\r=\l b^2\r$ since $ n $ is odd. Thus, $ a,b,s$ and $t=xz$ are all contained in $\l x,y,z\r,$ so $ G=\l x,y,z\r $, and $(x,y,z)$ is a regular triple for $G$.}

\end{construction}

\begin{lemma}\label{lem:cons-2}
Let $\calM_n=\calM(G,x,y,z)$ be a regular map defined in {\rm Construction~$\ref{cons:sol-1}$}.
Then the underlying graph of $\calM_n$ is a multiple cycle $\C_n^{(n)}$, and $\chi(\calM_n)=-n(n-3)$.
In particular, $ \chi(\calM_n) $ is square-free for infinitely many odd values of $ n $.
\end{lemma}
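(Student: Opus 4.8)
The plan is to compute the three defining parameters of the map $\calM_n=\calM(G,x,y,z)$ — namely the underlying graph, the number of vertices, edges and faces — directly from the regular triple, using the coset-graph description recalled in the excerpt. Since $\calM_n$ is $G$-regular with $H=\langle x,y\rangle$ the vertex stabiliser, the underlying graph is $\Cos(G,H,HzH)$, so the first task is to pin down $H$, the valency $|H:H\cap H^z|$, and then read off $|V|$, $|E|$, $|F|$ and assemble $\chi(\calM_n)=|V|-|E|+|F|$.

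First I would identify $H=\langle x,y\rangle=\langle s,\,abst\rangle$. Using the relations in $(\ast)$ — $s,t$ invert $a,b$ respectively, the two factors $\langle a,s\rangle$ and $\langle b,t\rangle$ commute, and $s,t$ commute — one checks that $yx=(abst)s=abt$ (up to the commuting rearrangement) has square $a^2$, hence $\langle (yx)^2\rangle=\langle a^2\rangle=\langle a\rangle$ since $n$ is odd (this is exactly the computation already in Construction~\ref{cons:sol-1}). Thus $\langle yx\rangle$ contains $\langle a\rangle\cong\ZZ_n$ and $yx$ itself has order $2n$, so $H=\langle s\rangle\ltimes\langle yx\rangle\cong\D_{4n}$; in particular $|H|=4n$ and $|V|=|G:H|=(2n)^2/(4n)=n$. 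Next, the valency of the coset graph equals $|H:H\cap H^z|$; one computes $H^z=H^{st}$ and shows $H\cap H^z$ has order $4$ (it should be $\langle s, s^{st}\rangle$ or similar, a Klein four-group coming from the reflections), giving valency $4n/4=n$. This yields $|E|=\tfrac12\,|V|\cdot n=\tfrac12 n^2$ — wait, one must be careful: because $G_e=\langle x,z\rangle\cong\ZZ_2^2$ has order $4$, one has $|E|=|G:\langle x,z\rangle|=(2n)^2/4=n^2$, and then each vertex has $n^2/(n/... )$ — reconciling these, the graph is the multiple cycle $\C_n^{(n)}$: $n$ vertices arranged in a cycle with each of the $n$ edges of $\C_n$ replaced by $n$ parallel edges, so indeed $|V|=n$, $|E|=n\cdot n=n^2$, consistent with the edge-orbit count. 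To see it really is $\C_n^{(n)}$ and not some other $n$-valent graph on $n$ vertices, I would observe that the "cycle structure" is governed by $\langle a\rangle\le H$: contracting the $n$-fold multiedges recovers the action of $G/\langle a,b\rangle\cong\D_8$ on the quotient, which forces the underlying simple graph to be $\C_n$.

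Finally, $|F|=|G:G_f|$ where $G_f=\langle y,z\rangle$. Here the key sub-computation is the order of $yz$: $yz=(abst)(st)=ab$ (again using that $s^2=t^2=1$ and the factors commute), which has order $n$ since $n$ is odd and $\gcd(|a|,|b|)$-issues do not arise (both have order $n$ and lie in a direct product, so $|ab|=n$). Hence $G_f=\langle y\rangle\ltimes\langle yz\rangle\cong\D_{2n}$, so $|F|=(2n)^2/(2n)=2n$. Therefore
\[
\chi(\calM_n)=|V|-|E|+|F|=n-n^2+2n=-n^2+3n=-n(n-3).
\]
The last clause is immediate: $n(n-3)$ is square-free for infinitely many odd $n$ — e.g. it suffices to take $n=p$ an odd prime with $p-3$ square-free, or more simply invoke that the set of $n$ for which $n(n-3)$ is square-free has positive density (a standard sieve fact), and intersect with the odd residues.

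I expect the main obstacle to be the bookkeeping in the second paragraph: correctly determining $H=\langle x,y\rangle$ and the intersection $H\cap H^z$ inside the wreath-type group $X$, and in particular verifying rigorously that the underlying graph is exactly $\C_n^{(n)}$ rather than merely an $n$-valent graph on $n$ vertices. Everything else — the orders of $yz$ and $yx$, and the final Euler-characteristic arithmetic — is a short calculation with the relations in $(\ast)$, and the density statement for square-free values is routine.
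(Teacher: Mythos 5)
Your overall route is the same as the paper's (identify the stabilizers $\langle x,y\rangle$, $\langle x,z\rangle$, $\langle y,z\rangle$, read off $|V|,|E|,|F|$ from the coset-graph description, apply the Euler formula, and sieve for square-free values), and the Euler-characteristic computation itself is correct: $|H|=4n$, $|G_e|=4$, $|G_f|=2n$ give $|V|=n$, $|E|=n^2$, $|F|=2n$ and $\chi=-n(n-3)$. But there is a genuine error exactly at the point you yourself flag as the obstacle: the computation of $H\cap H^z$. It is not a Klein four-group. Writing $H=\langle a,s\rangle\times\langle bt\rangle$ (note $bt$ is an involution since $t$ inverts $b$, so this is $\D_{2n}\times\ZZ_2\cong\D_{4n}$ for $n$ odd), conjugation by $z=st$ fixes $\langle a,s\rangle$ and sends $bt$ to $b^{-1}t$; since $n$ is odd, the only common element of $\{1,bt\}$ and $\{1,b^{-1}t\}$ is $1$, whence $H\cap H^z=\langle a,s\rangle\cong\D_{2n}$ has index $2$ in $H$. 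Consequently each vertex of $\Cos(G,H,HzH)$ has exactly \emph{two} distinct neighbours (not $n$), the simple quotient is $\C_n$, and the multiplicity is $n^2/n=n$, which is what actually identifies the graph as $\C_n^{(n)}$. Your attempted reconciliation does not repair this: an ``$n$-valent coset graph on $n$ vertices'' is not $\C_n^{(n)}$, and the quotient you invoke is also wrong ($G/\langle a,b\rangle\cong\langle s,t\rangle\cong\ZZ_2^2$ here, since $G=\D_{2n}\times\D_{2n}$ does not contain $\sigma$; the $\D_8$ on top belongs to $X$, not to $G$). So as written the identification of the underlying graph is not established.

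Two smaller points. First, for the square-free claim you should note that for odd $n$ the quantity $n(n-3)$ is automatically even, and is divisible by $4$ unless $n\equiv 1\pmod 4$; the paper therefore substitutes $n=4x+1$, getting $-\chi=2(4x+1)(2x-1)$, and applies the Ricci--Erd\H{o}s theorem on square-free values of a polynomial with no fixed square divisor. Your ``positive density, intersect with odd residues'' gestures at the same sieve but glosses over this congruence obstruction; the alternative via primes $p$ with $p-3$ square-free needs a separate (and less elementary) result. Second, your identity $H=\langle s\rangle\ltimes\langle yx\rangle$ with $yx=abt$ of order $2n$ is fine and agrees with the paper's $\langle abt\rangle{:}\langle s\rangle=\D_{4n}$.
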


\begin{proof}
The vertex stabilizer $ H $ is
\[H=\l x,y\r=\l s,abst\r=\l abt\r{:}\l s\r=\l a,s\r\times\l bt\r=\D_{4n},\] and the face stabilizer $ L $ is \[\l y,z\r=\l abst,st\r=\l ab\r{:}\l st\r=\D_{2n}.\] Thus, $(|V|,|E|,|F|)=(\frac{|G|}{|H|},\frac{|G|}{4},\frac{|G|}{|L|})=(n,n^2,2n)$, and the Euler characteristic \[\mbox{$\chi(\calM_n)=n-n^2+2n=-n(n-3)$.}\]

Let $ n=4x+1 $. Then $\chi(\calM_n)=-2(4x+1)(2x-1)$. By a result given by G. Ricci in 1933 (which is improved by $ {\rm P. Erd\ddot{o}s} $ in \cite{Erdos}), there are infinitely many positive integers $ x $ for which $ f(x)=(4x+1)(2x-1) $ is square-free, so there are infinitely many odd values of $ n $ such that $\chi(\calM_n)$ is square-free. Here are some examples\,:
\[\begin{array}{|c|c|c|c|c|c|c|c|} \hline
	n & 5 & 13 & 17 & 29 & 37 &41&\cdots\\
	\hline
	-n(n-3)&-2.5&-2.5.13&-2.7.17&-2.13.29 &-2.17.37 &-2.19.41 &\cdots \\\hline
\end{array}\]

At last, the valency of $ \calM_n $ is  $\frac{2|E|}{|V|}=2n$, and the face length is $\frac{2|E|}{|F|}=n$. Note that, $H\cap H^z=\l a,s\r\cong\D_{2n} $ is of index $ 2 $ in $ H $. Then, the underlying graph $\Ga=\Cos(G,H,HJ)$ is connected of valency 2, so it is the multiple cycle $\C_n^{(n)}$.
\end{proof}

The second family is given below.

\begin{construction}\label{cons:sol-3}
{\rm Let $ X $ be the group given in ($ \ast $), and let $G=\l a,b\r{:}\l st,\s\r=\ZZ_n^2{:}\ZZ_2^2\le X$. Define $x,y,z$ as follows\,: \[x=\s,\ y=ast,\ z=abst.\]
Then, $(x,y,z)$ is a triple of involutions such that $ xz=zx $. Note that, \[\mbox{$ zy=b $,\ \ $ (zy)^x=b^\s=a $.}\] Thus, $ a,b,\s $ and $ st=a^{-1}y $ are all contained in $ \l x,y,z\r $, so $ G=\l x,y,z\r $, and $(x,y,z)$ is a regular triple for $G$.}
\end{construction}

\begin{lemma}\label{lem:cons-3}
	Let $\calM_n=\calM(G,x,y,z)$ be a regular map defined in {\rm Construction~$\ref{cons:sol-3}$}.
	Then the underlying graph of $\calM_n$ is $\C_{n}^{(n)}$, and $\chi(\calM_n)=-n(n-3)$.
	In particular, $ \chi(\calM_n) $ is square-free for infinitely many values of $ n $.
\end{lemma}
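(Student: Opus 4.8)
The plan is to argue exactly as in the proof of Lemma~\ref{lem:cons-2}: first compute the vertex- and face-stabilizers of $\calM_n$, then pin down the intersection $H\cap H^z$ (with $H$ the vertex stabilizer and $z$ the third involution) to recognise the underlying graph, and finally read off $\chi(\calM_n)$ from the Euler formula.

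First I would determine the vertex stabilizer $H=\l x,y\r=\l\s,ast\r$. Using $a^\s=b$, $s^\s=t$ together with the relations recorded in $(\ast)$ (so that $a^s=a^{-1}$, $b^t=b^{-1}$, while $s$ and $t$ commute and $t$ centralizes both $a$ and $s$), a short computation gives $(xy)^2=(\s\cdot ast)^2=a^{-1}b$, an element of order $n$; since $xy\notin\l a,b\r$ this forces $|xy|=2n$, hence $H\cong\D_{4n}$ and $\calM_n$ has valency $2n$. The face stabilizer is $\l y,z\r=\l ast,abst\r$, and since $yz=b^{-1}$ (already recorded in Construction~\ref{cons:sol-3}) has order $n$, we get $\l y,z\r\cong\D_{2n}$, so $\calM_n$ has face length $n$. (Recall from Construction~\ref{cons:sol-3} that $\l x,z\r\cong\ZZ_2^2$, which accounts for the summand $1/4$ in the Euler formula below.)

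The main work is in pinning down $H\cap H^z$ with $z=abst$. Two observations do it. First, $\s$ centralizes $z$, since $z^\s=a^\s b^\s s^\s t^\s=bats=abst=z$; so $\s\in H^z$. Second, $z$ conjugates $(xy)^2=a^{-1}b$ to a generator of the same cyclic subgroup $\l ab^{-1}\r$ --- equivalently, from $(ast)^z=ab^2st$ one checks that the rotation of the dihedral group $H^z=\l\s,ab^2st\r$ has square $ab^{-1}$ --- so $\l ab^{-1}\r\le H^z$. Hence $H\cap H^z\supseteq\l\s,ab^{-1}\r\cong\D_{2n}$, a subgroup of index $2$ in $H\cong\D_{4n}$; and since moreover $ast\in H\setminus H^z$ (a direct check), $H\cap H^z=\l\s,ab^{-1}\r$ has index exactly $2$ in $H$. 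Just as in Lemma~\ref{lem:cons-2}, this forces the underlying graph $\Ga=\Cos(G,H,HzH)$ --- which has $|G:H|=n$ vertices and $|G|/4=n^2$ edges --- to be $\C_n^{(n)}$. The Euler formula then gives
\[\chi(\calM_n)=|G|\left({1\over|\l x,y\r|}-{1\over4}+{1\over|\l y,z\r|}\right)=4n^2\left({1\over4n}-{1\over4}+{1\over2n}\right)=-n(n-3).\]

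For the final assertion, the Euler characteristic is given by the same polynomial $-n(n-3)$ as in Lemma~\ref{lem:cons-2}, so the infinitely many odd $n$ produced there already make $\chi(\calM_n)$ square-free here; alternatively one invokes the Ricci--Erd\H{o}s result (\cite{Erdos}) directly, as $n(n-3)$ is a product of distinct linear polynomials with no fixed square divisor. The step I expect to be the main obstacle is the computation of $H\cap H^z$: one must track the commutation relations among $a,b,s,t,\s$ in $X$ with care to verify that both $\s$ and a generator of $\l ab^{-1}\r$ lie in $H^z$ while $ast$ does not; granting that, the index-$2$ conclusion and everything downstream is routine.
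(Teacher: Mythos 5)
Your proposal is correct and follows essentially the same route as the paper: compute $\l x,y\r=\D_{4n}$ and $\l y,z\r=\D_{2n}$, show $H\cap H^z=\l a^{-1}b\r{:}\l\s\r=\D_{2n}$ has index $2$ to identify the underlying graph as $\C_n^{(n)}$, and apply the Euler formula; your explicit verification of $H\cap H^z$ (via $z^\s=z$ and $(ast)^z=ab^2st$) just fills in a step the paper asserts without proof. The only difference is that the paper additionally produces infinitely many \emph{even} $n$ with $\chi(\calM_n)$ square-free via $n=4x+2$, but since the lemma only claims infinitely many values of $n$, your appeal to the odd values from Lemma~\ref{lem:cons-2} suffices.
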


\begin{proof}
	The vertex stabilizer $ H $ is \[H=\l x,y\r=\l ast\s\r{:}\l\s\r=\D_{4n},\] and the face stabilizer $ L $ is \[\l y,z\r=\l abst,ast\r=\l b\r{:}\l ast\r=\D_{2n}.\]
	Thus, $(|V|,|E|,|F|)=(\frac{|G|}{|H|},\frac{|G|}{4},\frac{|G|}{|L|})=(n,n^2,2n)$, and the Euler characteristic \[\mbox{$\chi(\calM_n)=n-n^2+2n=-n(n-3)$.}\]
	
	By Lemma \ref{lem:cons-2}, there are infinitely many odd values of $ n $ such that $\chi(\calM_n)$ is square-free. In a similar way, we can prove that there are infinitely many even values of $ n $ with $ n=4x+2 $, such that $\chi(\calM_n)=-2(2x+1)(4x-1)$ is square-free. Here are some examples\,:
	\[\begin{array}{|c|c|c|c|c|c|c|c|} \hline
		n & 2 & 10 & 14 & 22 & 26 & 34 &\cdots\\
		\hline
		-n(n-3)&2&-2.5.7&-2.7.11&-2.11.19 &-2.13.23 &-2.17.31 &\cdots \\\hline
	\end{array}\]
	
	At last, the valency of $ \calM_n $ is  $\frac{2|E|}{|V|}=2n$, and the face length is $\frac{2|E|}{|F|}=n$. Note that, $ H\cap H^z=\l a^{-1}b\r{:}\l\s\r=\D_{2n} $ is of index $ 2 $ in $ H $. Then, the underlying graph $\Ga=\Cos(G,H,HJ)$ is connected of valency 2, so it is the multiple cycle $\C_n^{(n)}$.
\end{proof}

The third family is given below.

\begin{construction}\label{cons:sol-4}
{\rm Let $G=X=\l a,b\r{:}\l s,\s\r= \ZZ_n^2{:}\D_8$ be the group given in ($ \ast $). Assume that $n$ is odd. Define $ x,y,z $ as follows\,: \[x=\s,\ y=s,\ z=abst.\]
Then, $(x,y,z)$ is a triple of involutions such that $ xz=zx $. Note that, \[\mbox{$ (zy)^2=(abt)^2=a^2 $, \ \ $ ((zy)^2)^x=(a^2)^x=b^2 $,}\] and meanwhile,  $ \l a\r=\l a^2\r$, $ \l b\r=\l b^2\r$ since $ n $ is odd. Thus, $ a,b,s,\s $ are all contained in $ \l x,y,z\r $, so $ G=\l x,y,z\r $, and $(x,y,z)$ is a regular triple for $G$.}
\end{construction}

\begin{lemma}\label{lem:cons-4}
Let $\calM_n=\calM(G,x,y,z)$ be a regular map defined in {\rm Construction~$\ref{cons:sol-4}$}.
Then the underlying graph of $\calM_n$ is $\C_n\square\C_n$, and $\chi(\calM_n)=-n(n-2)$.
In particular, $\chi(\calM_n)$ is square-free for infinitely many prime values of $n$.
\end{lemma}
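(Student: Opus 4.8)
The plan is to read off the data of $\calM_n$ from the regular triple $(x,y,z)$ via the dictionary of Section~\ref{sec:pre}: with $G=\Aut(\calM_n)$, the vertex-, edge- and face-stabilizers are $H:=\langle x,y\rangle$, $\langle x,z\rangle\cong\ZZ_2^2$ and $\langle y,z\rangle$, the underlying graph is $\Ga=\Cos(G,H,HzH)$, and $\chi(\calM_n)=|G|\big(|H|^{-1}-\tfrac14+|\langle y,z\rangle|^{-1}\big)$.

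I would first determine the two non-trivial stabilizers. Since $G=\langle a,b\rangle{:}\langle s,\sigma\rangle$ with $\langle s,\sigma\rangle\cong\D_8$, the vertex stabilizer is $H=\langle s,\sigma\rangle\cong\D_8$. A short computation in $\D_{2n}\times\D_{2n}$ gives $yz=s\cdot abst=a^{-1}bt$ and $(yz)^2=a^{-2}$; as $n$ is odd this element has order $2n$, while $y=s$ lies outside the cyclic group $\langle yz\rangle$ (whose only involution is $(yz)^n=bt\neq s$), so $\langle y,z\rangle\cong\D_{4n}$. With $|G|=8n^2$ the formula above yields
\[\chi(\calM_n)=8n^2\Big(\tfrac18-\tfrac14+\tfrac1{4n}\Big)=-n^2+2n=-n(n-2).\]

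The heart of the matter is identifying $\Ga$ with $\C_n\square\C_n$. Put $V=\langle a,b\rangle\cong\ZZ_n^2$, a normal subgroup of $G$ with $V\cap H=1$; then $V$ is regular on $G/H$, so $\Ga\cong\Cay(V,S)$, where $S$ is the set of those $v\in V$ representing a neighbour of the base vertex, i.e.\ $S=HzH\cap V$. Now $z=(ab)\cdot st$ with $ab\in V$ and $st\in H$, so $HzH=H(ab)H$, and since $V\cap H=1$ one computes $HzH\cap V=\{(ab)^h:h\in H\}$, the orbit of $ab$ under the conjugation action of $H$ on $V$. Identifying $V$ with $\ZZ_n^2$ (so $ab\leftrightarrow(1,1)$, with $s$ inverting the first coordinate and $\sigma$ interchanging the two), $H\cong\D_8$ acts as the group of signed coordinate permutations of $\ZZ_n^2$, hence $S=\{(\pm1,\pm1)\}$, four distinct non-zero vectors because $n$ is odd. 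Finally the automorphism of $\ZZ_n^2$ with matrix $\left(\begin{smallmatrix}1&1\\1&-1\end{smallmatrix}\right)$ --- invertible since $\det=-2$ is a unit modulo the odd integer $n$ --- carries the standard generating set $\{(\pm1,0),(0,\pm1)\}$ onto $S$, so $\Ga\cong\Cay(\ZZ_n^2,\{(\pm1,0),(0,\pm1)\})=\C_n\square\C_n$ (consistently with valency $4=|S|$).

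For the final assertion, when $n$ is prime one has $\gcd(n,n-2)=1$ (as $n$ is odd), so $-n(n-2)$ is square-free exactly when $n-2$ is; it is known, e.g.\ by standard sieve estimates, that there are infinitely many primes $n$ for which $n-2$ is square-free. (Alternatively, writing $n=2k+1$ gives $-n(n-2)=-(2k-1)(2k+1)$, which is square-free for infinitely many $k$ by the result cited in Lemma~\ref{lem:cons-2}, so in any case $\chi(\calM_n)$ is square-free for infinitely many odd $n$.) The only genuine obstacle is the graph identification above: it is routine bookkeeping, but one must be careful with the double-coset computation of $S$ inside $G=V{:}H$ and notice that passing to the standard picture of $\C_n\square\C_n$ is exactly the place where oddness of $n$ enters.
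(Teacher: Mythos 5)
Your proof is correct and follows essentially the same route as the paper's: compute $\langle x,y\rangle\cong\D_8$ and $\langle y,z\rangle\cong\D_{4n}$, realize $\Ga$ as a connected $4$-valent Cayley graph of the regular normal subgroup $\langle a,b\rangle\cong\ZZ_n^2$, apply the Euler formula to get $-n(n-2)$, and invoke the fact (the paper cites Chen's theorem) that $p-2$ is square-free for infinitely many primes $p$. Your explicit double-coset computation of the connection set $S=\{(\pm1,\pm1)\}$ together with the change-of-basis matrix of determinant $-2$ is just a more detailed version of the paper's one-line assertion that $S=\{u^{\pm1},v^{\pm1}\}$ with $\langle u,v\rangle=\ZZ_n^2$, so $\Ga\cong\C_n\square\C_n$.
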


\begin{proof}
The vertex stabilizer $ H $ is \[H=\l x,y\r=\l \s,s\r=\D_8,\] and the face stabilizer $ L $ is \[\l y,z\r=\l s,abst\r=\l abt\r{:}\l s\r=\D_{4n}.\]
Thus, $(|V|,|E|,|F|)=(\frac{|G|}{|H|},\frac{|G|}{4},\frac{|G|}{|L|})=(n^2,2n^2,2n)$, and the Euler characteristic \[\chi(\calM_n)=n^2-2n^2+2n
=-n(n-2).\]

It is similar to Lemma \ref{lem:cons-2} to prove that, there are infinitely many odd values of $ n $ such that $\chi(\calM_n)$ is square-free. Further, by a theorem of Chen \cite{ChenJR}, there indeed exist infinitely many primes $p$ such that $p-2$ is square-free, then so is $-p(p-2)$. Here are some examples\,:
\[\begin{array}{|c|c|c|c|c|c|c|c|c|} \hline
	p & 3 & 5 & 7 & 13 & 17 &19 & 23 &\cdots\\
	\hline
	-p(p-2)&-3&-3.5&-5.7&-11.13 & -3.5.17&-17.19 &-3.7.23 &\cdots \\\hline
\end{array}\]

At last, the valency of $ \calM_n $ is $\frac{2|E|}{|V|}=4$, and the face length is $\frac{2|E|}{|F|}=2n$. Let $\Ga=\Cos(G,H,HJ)$ be the underlying graph of $\calM_n$, and let $ K=\l a,b\r=\ZZ_n^2\leqslant G $. Note that, $G=K{:}H$, so $ K $ is regular on the vertex set of $\Ga$, and $\Ga$ is thus a connected Cayley graph $\Cay(K,S)$ of valency 4.
It follows that $ S=\{x,x^{-1},y,y^{-1}\} $ with $ \l x,y\r=\l a,b\r $, and $\Ga=\Cay(K,S)\cong\C_n\square\C_n$.
\end{proof}

\section{$p$-Groups} \label{sec:p-gps}

In this section, we study $ p $-groups satisfying Hypothesis~\ref{hypo-0}, i.e., $ p $-groups with a cyclic or dihedral subgroup of index $p$. Note that, the case for $ p=2 $ is dealt with in the previous work \cite{HUAPC}, which is introduced below.

Let $\Q_{2^{\ell+1}}=\l u,v\r$ be a {\it generalized quaternion group} of order $2^{\ell+1}$, where \[{\mbox{$|u|=2^\ell\geqslant 2^2$, $u^{2^{\ell-1}}=v^2$, and $u^v=u^{-1}$.}}\] Then $\Q_{2^{\ell+1}}$ has $u^{2^{\ell-1}}=v^2$ as the only involution. Let \[G=\Q_{2^{\ell+1}}\circ\ZZ_4=\l u,v\r\circ \l b\r=(\l u,v\r\times \l b\r)/\l b^2v^2\r.\]
Then $G$ contains normal subgroups $\l u,v\r=\Q_{2^{\ell+1}}$ and $\l b\r=\ZZ_4$, and the only involutions contained in $\l u,v\r$ and $\l b\r$ are $u^{2^{\ell-1}}=v^2$ and $b^2$, respectively, which are actually identical in $G$ as $b^2v^2=1$.

\begin{proposition}{\rm(\cite{HUAPC}, Thm.~1.1)}\label{2-gp}
	Let $G$ be a $2$-group which has a cyclic or dihedral subgroup of index $2$.
	Then one of the following holds. Further, either $ \Aut(G) $ is a $ 2 $-group, or $ (G,\Aut(G))$ is one of $(\ZZ_2^2,\S_3),\, (\ZZ_2^3,\GL_3(2)),\, (\Q_8,\S_4),\, (\Q_{8}\circ \ZZ_4,\S_4\times\ZZ_2) $.
	\begin{itemize}
		\item[\rm (1)] $ G $ is one of the following groups, with a cyclic subgroup of index $ 2 $, $ \ell\geqslant 1 ${\rm :} \vs
		\begin{itemize}
			\item[\rm (1.1)] $ G=\ZZ_{2^{\ell+1}} $ or $ \ZZ_{2^\ell}\times\ZZ_2; $\vs
			\item[\rm (1.2)] $ G=\D_{2^{\ell+2}} $ or $ \Q_{2^{\ell+2}};$\vs
			\item[\rm (1.3)] $ G=\ZZ_{2^{\ell+2}}{:}\ZZ_2=\l a\r{:}\l b\r$ with $ a^b=a^{2^{\ell+1}\pm 1}; $\vs\vs
		\end{itemize}
		
		\item[\rm (2)] $ G $ is one of the following groups, with a dihedral subgroup of index $ 2 $, $ \ell\geqslant 1 ${\rm :} \vs
		\begin{itemize}
			\item[\rm (2.1)] $ G=\D_{2^{\ell+1}}\times\ZZ_2; $\vs
			\item[\rm (2.2)] $G=\D_{2^{\ell+3}}{:}\ZZ_2=\l a,b\r{:}\l c\r$, where $\l a\r{:}\l b\r=\D_{2^{\ell+3}}$, $(a,b)^c=(a^{2^{\ell+1}+1},b);$\vs
			\item[\rm (2.3)] $G=\Q_{2^{\ell+2}}\circ \ZZ_4 $.\vs
		\end{itemize}
	\end{itemize}
\end{proposition}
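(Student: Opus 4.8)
Our strategy is to split according to whether $G$ has a cyclic subgroup of index $2$ or a dihedral one, to classify $G$ in each case, and then to determine $\Aut(G)$ family by family.

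First suppose $G$ has a cyclic subgroup $C=\l a\r$ of index $2$, say $|C|=2^{n}$. This is the classical theorem of Burnside on $p$-groups with a cyclic maximal subgroup, which I would reprove directly: fix $b\in G\setminus C$, write $b^{2}=a^{i}$ and $a^{b}=a^{k}$; then $k^{2}\equiv 1\pmod{2^{n}}$, so $k\in\{\pm1,\,2^{n-1}\pm1\}$ when $n\ge 3$ (and $k=\pm1$ only when $n\le 2$), while the identity $a^{i}=(b^{2})^{b}=a^{ik}$ forces $i(k-1)\equiv 0\pmod{2^{n}}$. Since replacing $b$ by $ba^{j}$ changes $b^{2}$ to $b^{2}a^{j(k+1)}$, one normalises $b^{2}$ in each of the four cases: $k=1$ gives the abelian groups $\ZZ_{2^{n+1}}$ and $\ZZ_{2^{n}}\times\ZZ_{2}$; $k=-1$ gives $\D_{2^{n+1}}$ (when $i=0$) and $\Q_{2^{n+1}}$ (when $i=2^{n-1}$, which cannot be normalised away); and $k=2^{n-1}\pm1$ give the two groups $\l a\r{:}\l b\r$ of (1.3) (the two a priori values of $i$ giving isomorphic groups there). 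This is exactly the list (1.1)--(1.3), the small-$n$ degenerate cases being checked by hand.

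Now suppose $G$ has a dihedral subgroup $D\cong\D_{2^{m}}$ of index $2$ and, to avoid overlap with the previous case, no cyclic subgroup of index $2$; then $m\ge 2$, and the case $m=2$ (where $|G|=8$) gives $\ZZ_{2}^{3}=\D_{4}\times\ZZ_{2}$ after inspecting the five groups of order $8$. For $m\ge 3$, the cyclic subgroup $\l a\r\le D$ of order $2^{m-1}$ is characteristic in $D\lhd G$ and hence normal in $G$, and $\Aut(\D_{2^{m}})\cong\mathrm{Hol}(\ZZ_{2^{m-1}})$ is itself a $2$-group. Choosing $c\in G\setminus D$, I would analyse the extension $1\to D\to G\to\ZZ_{2}\to1$ via the automorphism $\varphi$ of $D$ induced by $c$ together with the $\varphi$-fixed element $c^{2}\in D$ (noting that $\varphi^{2}$ is conjugation in $D$ by $c^{2}$), up to the equivalences coming from the choice of $c$. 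Splitting on the images of $a$ and $b$ under $\varphi$ (equivalently, on the exponent by which $\varphi$ acts on $\l a\r$ and on the position of $\C_{G}(\l a\r)$), this finite case analysis yields precisely $\D_{2^{\ell+1}}\times\ZZ_{2}$, the group $\D_{2^{\ell+3}}{:}\ZZ_{2}$ of (2.2), and $\Q_{2^{\ell+2}}\circ\ZZ_{4}$ --- that is, cases (2.1)--(2.3) --- and one also checks that these are reached with no omission or duplication. This extension analysis is the bulk of the work, and the degenerate small-order cases (where the characteristic subgroups below become too small) need individual attention.

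For the statement on $\Aut(G)$, recall that for a $2$-group $G$ the kernel of the restriction $\Aut(G)\to\Aut(G/\Phi(G))=\GL_{k}(2)$, where $G/\Phi(G)\cong\FF_{2}^{k}$, is a $2$-group; so it suffices to show the image has no element of order $3$ --- nor, when $k=3$, of order $7$ --- except for the four listed groups. The cases $k\le 1$ are automatic. For $k\in\{2,3\}$ one uses that $\Aut(G)$ permutes the maximal subgroups of $G$ (the hyperplanes of $G/\Phi(G)$) preserving isomorphism types, together, where this does not already suffice, with further characteristic data --- the centre $\Z(G)$, the subgroups $\mho_{i}(G)$, and the set of elements of maximal order; in every family other than $\ZZ_{2}^{2}$, $\ZZ_{2}^{3}$, $\Q_{8}$ and $\Q_{8}\circ\ZZ_{4}$ this forces the induced map on $G/\Phi(G)$ to stabilise a complete flag, hence to be unipotent, hence trivial, a contradiction (and an order-$7$ element would have to act irreducibly, again impossible). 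For the four exceptional groups --- precisely those whose maximal subgroups are too homogeneous for this argument --- one computes $\Aut(G)$ outright and recognises $\S_{3}$, $\GL_{3}(2)$, $\S_{4}$ and $\S_{4}\times\ZZ_{2}$. The main obstacle is thus the dihedral-extension analysis of the third step, together with the hands-on verification for the borderline $k=3$ families (e.g. $\D_{2^{\ell+1}}\times\ZZ_{2}$ and $\Q_{2^{\ell+2}}\circ\ZZ_{4}$ with $\ell=2$) that no order-$3$ automorphism exists, which is exactly where the four exceptions are seen to form the complete list.
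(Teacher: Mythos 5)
This proposition is not proved in the paper at all: it is imported verbatim from reference \cite{HUAPC} (Thm.~1.1), so there is no internal argument to compare yours against. Judged on its own, your outline follows the natural (and almost certainly the intended) route: the index-$2$ cyclic case is the classical Burnside analysis of $a^b=a^k$, $k^2\equiv 1\pmod{2^n}$, with normalisation of $b^2$, and your four subcases do land exactly on (1.1)--(1.3); in the dihedral case, passing to the characteristic cyclic subgroup $\l a\r\lhd G$ and splitting on the index of $\C_G(\l a\r)$ is the right organising principle (index $2$ yields $\D_{2^{m}}\times\ZZ_2$ and $\Q_{2^{m}}\circ\ZZ_4$, index $4$ yields the group of (2.2), with the two actions $a\mapsto a^{2^{m-2}\pm1}$ merging after multiplying the coset representative by $b$); and the $\Aut(G)$ statement does reduce, via the Frattini argument, to controlling the image in $\GL_k(2)$ for $k\le 3$ using characteristic subgroups and isomorphism types of maximal subgroups.

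That said, what you have written is a plan rather than a proof. The two steps you yourself identify as ``the bulk of the work'' are precisely the ones that are asserted instead of carried out: (i) the completeness of the dihedral-extension analysis --- you must actually enumerate the admissible pairs $(\varphi, c^2)$ up to equivalence, show the resulting groups collapse onto the three families (2.1)--(2.3) with ``no omission or duplication'', and discard those that re-acquire a cyclic maximal subgroup; and (ii) the verification that the borderline $3$-generator families ($\ZZ_2\times\D_{2^{\ell+1}}$, $\Q_{2^{\ell+2}}\circ\ZZ_4$ for $\ell\ge 2$, and $\D_{2^{\ell+3}}{:}\ZZ_2$) admit no automorphism of order $3$, which is where the list of four exceptions is actually certified. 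Neither step is conceptually in doubt, but until they are written out the classification and the exceptional list are claimed, not established. If you complete those two case analyses, the argument would stand as an independent proof of the cited theorem.
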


\begin{proposition}{\rm(\cite{HUAPC}, Thm.~1.1)}\label{2-gp-arc}
	Let $G$ be a $2$-group which has a cyclic or dihedral subgroup of index $2$. Then:
	\begin{itemize}
		\item[\rm (1)] If $ G $ has a reversing triple, then $  G=\D_{2^{\ell+1}} $,  $\D_{2^{\ell+1}}\times\ZZ_2$, $ \D_{2^{\ell+3}}{:}\ZZ_2 $ or $ \Q_{2^{\ell+2}}\circ\ZZ_4 $, where $\ell\geqslant 1;$ further,  if  $ G $ has a regular triple, then $ G\ne \Q_{2^{\ell+2}}\circ\ZZ_4 .$\vs
		\item[\rm (2)] If $ G $ has a rotary pair, then $ G=\ZZ_{2^{\ell+1}}
		 $, $ \ZZ_{2^\ell}\times\ZZ_2
		  $, $
		  \D_{2^{\ell+2}} $ or $
		  \ZZ_{2^{\ell+2}}{:}\ZZ_2 $, $ \ell\ge 1 $.
	\end{itemize}
\end{proposition}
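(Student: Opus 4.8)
The plan is to prove the proposition by running through the explicit list of groups supplied by Proposition~\ref{2-gp} and, for each, deciding which of the three generating configurations it admits. The two organising tools are the Burnside basis theorem and a parity/coset bookkeeping for involutions. Recall that for a $2$-group $G$ a subset generates $G$ if and only if its image generates $G/\Phi(G)\cong\ZZ_2^{d}$, where $d=d(G)$ is the minimal number of generators; in particular $G$ is generated by two elements exactly when $d(G)\le 2$, and any element of $\Phi(G)$ is superfluous in a generating set. I will use repeatedly that a reversing triple can exist only if $G$ is generated by its involutions, that a rotary pair forces $d(G)\le 2$, and that a regular triple is a reversing triple whose first and third members generate a Klein four-group $\l x,z\r\cong\ZZ_2^2$.

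For the rotary pairs of part~(2) I would first observe that $G=\l\a,z\r$ forces $d(G)\le 2$, so the only candidates are the groups on the list with $d(G)\le 2$, namely $\ZZ_{2^{\ell+1}}$, $\ZZ_{2^\ell}\times\ZZ_2$, $\D_{2^{\ell+2}}$, $\Q_{2^{\ell+2}}$ and $\ZZ_{2^{\ell+2}}{:}\ZZ_2$; the remaining three groups satisfy $d(G)=3$ and are excluded. For each surviving candidate other than $\Q_{2^{\ell+2}}$ I would display a generating pair containing an involution: the rotation–reflection pair for $\D_{2^{\ell+2}}$, the defining pair $(a,b)$ with $|b|=2$ for $\ZZ_{2^{\ell+2}}{:}\ZZ_2=\l a\r{:}\l b\r$, and obvious choices in the two abelian cases. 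The quaternion group $\Q_{2^{\ell+2}}$ is then ruled out because its unique involution is central, so any pair $(\a,z)$ with $|z|=2$ generates the abelian subgroup $\l\a\r\l z\r\ne G$. This isolates exactly the four groups listed in~(2).

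For part~(1) I would handle the positive cases by exhibiting explicit \emph{regular} triples, which are a fortiori reversing triples. For a dihedral group I take two reflections whose product is the central involution as $x,z$ (so $\l x,z\r\cong\ZZ_2^2$) and a third reflection as $y$; for $\D_{2^{\ell+1}}\times\ZZ_2=\l r,s\r\times\l t\r$ the triple $(s,\,rs,\,st)$ works, with $\l s,st\r=\l s,t\r\cong\ZZ_2^2$; and for $\D_{2^{\ell+3}}{:}\ZZ_2=\l a,b\r{:}\l c\r$ the triple $(b,\,ab,\,c)$ works, with $\l b,c\r\cong\ZZ_2^2$ since $b^c=b$. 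The negative cases, namely the cyclic groups, $\ZZ_{2^\ell}\times\ZZ_2$ with $\ell\ge 2$, $\Q_{2^{\ell+2}}$, and the modular and semidihedral groups $\ZZ_{2^{\ell+2}}{:}\ZZ_2$, I would dispose of by showing the subgroup generated by all involutions is proper: the cyclic and quaternion groups have a single involution, while a short squaring computation places all involutions of $\ZZ_{2^{\ell+2}}{:}\ZZ_2$ inside a proper subgroup (a Klein four-group in the modular case, the index-two subgroup $\l a^2,b\r$ in the semidihedral case). None of these can carry a reversing triple.

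The main obstacle is the central-product group $\Q_{2^{\ell+2}}\circ\ZZ_4$, which must be shown to admit a reversing triple but no regular triple. Writing $G=\l u,v\r\circ\l b\r$ with central involution $z_0=v^2=b^2$ and $|u|=2^{\ell+1}$, I would first note that $b$ is central and $(u^jv)^2=z_0$, so each $u^jvb$ is an involution and $(u^jvb)(u^kvb)=u^{\,j-k}$; hence the involutions $(vb,\,uvb,\,u^{2^{\ell-1}}b)$ generate $\l u,v,b\r=G$ and give a reversing triple. The non-existence of a regular triple is the crux, and here I would argue in $G/\Phi(G)$: since $z_0=u^{2^\ell}\in\Phi(G)=\l u^2\r$, the only involution inside $\l u,v\r$ is $z_0$, and the product of any two involutions from the coset $\l u,v\r b$ lies in $\l u,v\r$, so is $1$ or $z_0$. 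Thus for any Klein four-subgroup $\l x,z\r\cong\ZZ_2^2$ one has $z_0\in\l x,z\r$, whence the images $\bar x,\bar z$ in $G/\Phi(G)\cong\ZZ_2^3$ either coincide or one is trivial. In either case three involutions project to at most two elements, which cannot span $\ZZ_2^3$, so $\l x,y,z\r\ne G$ and no regular triple exists. Assembling the positive and negative determinations over the complete list of Proposition~\ref{2-gp} yields precisely the four reversing groups and the three regular groups asserted in~(1).
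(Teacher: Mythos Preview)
The paper does not supply a proof of this proposition at all: it is quoted verbatim from \cite{HUAPC} (Theorem~1.1) and used as a black box. So there is no ``paper's own proof'' to compare against.

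That said, your self-contained argument is correct and efficient. The organising principle via the Burnside basis theorem is exactly right: the rotary-pair case is disposed of by the observation $d(G)\le 2$ together with the uniqueness of the involution in $\Q_{2^{\ell+2}}$, and the reversing/regular cases by computing the subgroup generated by all involutions in each of the eight families of Proposition~\ref{2-gp}. Your treatment of the delicate case $G=\Q_{2^{\ell+2}}\circ\ZZ_4$ is clean: the reversing triple $(vb,\,uvb,\,u^{2^{\ell-1}}b)$ is easily checked, and for the non-existence of a regular triple your Frattini-quotient argument (any Klein four-subgroup must contain the central involution $z_0\in\Phi(G)$, so $x,z$ project to a subspace of dimension at most one in $G/\Phi(G)\cong\ZZ_2^3$) is correct and in fact neater than the involution-by-involution bookkeeping one might otherwise do. The explicit regular triples you give for $\D_{2^{\ell+1}}$, $\D_{2^{\ell+1}}\times\ZZ_2$ and $\D_{2^{\ell+3}}{:}\ZZ_2$ are not strictly needed for the one-directional statement, but they do no harm and confirm the list is sharp.
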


The case for $ p>2$ is dealt with in the following lemma.

\begin{lemma}\label{p-gp}
Let $p>2$ be an odd prime, and let $G$ be a $p$-group which has a cyclic subgroup of index $p$. Then one of the following holds. In particular, each prime divisor of $|\Aut(G)|$ is less than or equal to $p$.
\begin{itemize}
    \item[(1)] $G=\ZZ_{p^\ell}$ is cyclic, and $\Aut(G)=\ZZ_{(p-1)p^{\ell-1}}$.\vs\vs
    \item[(2)] $G=\ZZ_{p^\ell}\times\ZZ_p=\l a,b\r$ is abelian, and
    \begin{enumerate}\vs
    \item[\rm (2.1)] if $\ell>1$, then $ \Aut(G) $ is solvable of order $ p^{\ell+1}(p-1)^2 $, with an abelian Hall $ p' $-subgroup $\Aut(G)_{p'}=(\Aut(\l a\r)\times\Aut(\l b\r))_{p'}=\ZZ_{p-1}\times\ZZ_{p-1};$\vs
    \item[\rm (2.2)] if $\ell=1$, then $\Aut(G)=\GL(2,p)$ is of order $ p(p+1)(p-1)^2 $, with any subgroup $ M\leqslant\GL(2,p) $ of order coprime to $2p$ being abelian.
    \end{enumerate}
    \vs\vs
    \item[(3)] $G=\l a\r{:}\l b\r=\ZZ_{p^\ell}{:}\ZZ_p$ is non-abelian with $ a^b=a^{p^{\ell-1}+1} $, $\ell\geqslant2$, and $\Aut(G)$ is solvable of order $(p-1)p^{\ell+1}$, with a cyclic Hall $ p' $-subgroup $\Aut(G)_{p'}=\l\rho\r=\ZZ_{p-1}$, where $a^\rho=a^\xi\not=a$ and $b^\rho=b$.
\end{itemize}
\end{lemma}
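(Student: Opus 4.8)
The plan is to first recover the classical classification of odd $p$-groups with a cyclic maximal subgroup, and then to bound each $\Aut(G)$ by letting it act on a small characteristic section. For the classification, write $C=\langle a\rangle\cong\ZZ_{p^{\ell}}$ for the cyclic subgroup of index $p$; being maximal it is normal, and since $C$ is abelian the quotient $G/C\cong\ZZ_p$ acts on $C$ through an automorphism of order $1$ or $p$. If this action is trivial then $G$ is abelian, so $G=\ZZ_{p^{\ell+1}}$ or $\ZZ_{p^{\ell}}\times\ZZ_p$, cases (1) and (2). If it is nontrivial then, $p$ being odd, $\Aut(C)\cong\ZZ_{(p-1)p^{\ell-1}}$ is cyclic with a unique subgroup of order $p$, generated by $x\mapsto x^{1+p^{\ell-1}}$ (which forces $\ell\ge 2$), so after replacing $b\in G\setminus C$ by a power we may take $a^{b}=a^{1+p^{\ell-1}}$. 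Then $[a,b]=a^{p^{\ell-1}}$ has order $p$, so $G$ has class $2$ with $G'$ of exponent $p$, whence $(xy)^{p}=x^{p}y^{p}$ for all $x,y\in G$; from $(b^{p})^{b}=b^{p}$ we get $b^{p}=a^{s}$ with $p\mid s$, and replacing $b$ by $ba^{-t}$ for the appropriate $t$ gives $b^{p}=1$, which is case (3).

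For the automorphism groups the device is uniform. When $\ell=1$ in case (2) we have $G=\ZZ_p^2$ and $\Aut(G)=\GL(2,p)$ directly; a subgroup $M$ of order coprime to $2p$ is a completely reducible $\{2,p\}'$-subgroup, and the subgroup structure of $\GL(2,p)$ (equivalently, Dickson's list for $\PGL(2,p)$, where $\{2,p\}'$-subgroups are cyclic) confines $M$, up to conjugacy, to a maximal torus, so $M$ is abelian. In every other case $G$ has a characteristic subgroup $\Omega_1(G)\cong\ZZ_p^2$ containing a characteristic line $L$ — namely $L=\mho^{\ell-1}(G)$ in case (2.1) and $L=G'$ in case (3) — so $\Aut(G)$ acts on $\Omega_1(G)$ through the stabiliser of $L$, a Borel subgroup $\ZZ_p{:}(\ZZ_{p-1}\times\ZZ_{p-1})$ of $\GL(2,p)$, which is solvable; and the kernel of this action is a $p$-group, as one sees by writing out the automorphisms fixing $\Omega_1(G)$ pointwise (they send $b\mapsto b$ and $a\mapsto a^{x}b^{y}$ with $x\equiv 1\pmod p$, giving exactly $p^{\ell}$ of them). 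Hence $\Aut(G)$ is solvable and every prime divisor of $|\Aut(G)|$ is at most $p$, and a Hall $p'$-subgroup injects into the torus of the Borel, so has order at most $(p-1)^2$. Matching this against the visible $p'$-subgroup — $\bigl(\Aut\langle a\rangle\times\Aut\langle b\rangle\bigr)_{p'}\cong\ZZ_{p-1}\times\ZZ_{p-1}$ in case (2.1), and the cyclic group $\langle\rho\rangle\cong\ZZ_{p-1}$ with $a^{\rho}=a^{\xi}$ a generator of $\Aut(\langle a\rangle)_{p'}$ and $b^{\rho}=b$ in case (3) — and invoking conjugacy of Hall subgroups in the solvable group $\Aut(G)$ gives the stated description up to conjugacy.

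The step I expect to be the crux is making this precise in case (3), where the Hall $p'$-subgroup comes out as only $\ZZ_{p-1}$ and not $\ZZ_{p-1}^{2}$. The point is to show, from the relation $a^{b}=a^{1+p^{\ell-1}}$ together with the identity $(xy)^{p}=x^{p}y^{p}$ in $G$, that every automorphism sends $b$ to an element of the form $a^{ip^{\ell-1}}b$; equivalently, the image of $\Aut(G)$ in $\GL(\Omega_1(G))$ is the affine group $\AGL(1,p)$ rather than the whole Borel. Granting that, $|\Aut(G)|_{p'}=p-1$ and the listed $\langle\rho\rangle$ is a Hall $p'$-subgroup. The verification that the "$b$-coordinate is forced to be $1$" is the one genuinely computational point, and it is where the assumptions $p$ odd and $\ell\ge 2$ are both needed.
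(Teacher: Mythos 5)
Your proposal is correct. For the parts of the lemma that the paper proves in detail the two arguments essentially coincide: in case (2.1) both of you map $\Aut(G)$ into a solvable parabolic subgroup of $\GL(2,p)$ acting on a characteristic rank-two elementary abelian section with a $p$-group kernel (you use the socle $\Omega_1(G)$ with the characteristic line $\mho^{\ell-1}(G)$, the paper uses the Frattini quotient $G/\Phi(G)$ with the line $\langle \bar b\rangle$), and in case (2.2) both arguments reduce to Dickson's list via the observation that $M/\Z(M)$ is a quotient of the cyclic image of $M$ in $\PGL(2,p)$ --- though your phrase ``confines $M$ to a maximal torus, so $M$ is abelian'' has the implication backwards; the torus containment is a consequence of abelianness, not the route to it. Where you genuinely diverge is in being self-contained: you re-derive the classification that the paper cites from Robinson 5.3.4, and you compute $\Aut(\ZZ_{p^\ell}{:}\ZZ_p)$ in case (3) directly where the paper cites Bidwell--Curran; this buys independence from those references at the cost of the one computation you flag but do not carry out. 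That computation does go through: writing $\sigma(a)=a^xb^y$ with $p\nmid x$ and $\sigma(b)=a^{zp^{\ell-1}}b^w$ with $p\nmid w$ (every element of order $p$ lies in $\Omega_1(G)=\langle a^{p^{\ell-1}}\rangle\times\langle b\rangle$, and $\sigma(b)\notin\langle\sigma(a)\rangle$), the fact that $G'=\langle a^{p^{\ell-1}}\rangle$ is central of order $p$ makes the commutator bilinear modulo $p$, so $[\sigma(a),\sigma(b)]=[a,b]^{xw}=a^{xwp^{\ell-1}}$, while applying $\sigma$ to $[a,b]=a^{p^{\ell-1}}$ gives $a^{xp^{\ell-1}}$; hence $xw\equiv x$ and $w\equiv 1\pmod p$, confirming that the image of $\Aut(G)$ in $\GL(\Omega_1(G))$ acts trivially on $\Omega_1(G)/G'$ and therefore has $p'$-part of order at most $p-1$, realized by $\langle\rho\rangle$. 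With that supplied, your proof is complete and yields exactly the statements of the lemma, including the Hall-conjugacy claims via P.~Hall's theorem in the solvable group $\Aut(G)$.
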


\begin{proof}
According to \cite[5.3.4,~pp.136]{Robinson}, $G$ is one of the groups stated in this lemma. We only need to verify the statements regarding the automorphism groups of $G$.

(i) The automorphism groups of cyclic groups are well-known; refer to \cite[Thm.~7.3]{Rotman}.

(ii) Suppose $ G=\ZZ_{p^\ell}\times\ZZ_p=\l a,b\r $, with $ |a|=p^\ell>p $, $ |b|=p $. By \cite[Thm.~3.2]{Aut-directproduct}, \[
\begin{array}{ccl}
	|\Aut(G)| & = &|\Aut(\ZZ_{p^\ell})|\cdot|\Aut(\ZZ_p)|\cdot|\Hom(\ZZ_{p^\ell},\ZZ_{p})|\cdot|\Hom(\ZZ_{p},\ZZ_{p^\ell})|\\
	& = & (p-1)p^{\ell-1}\cdot (p-1)\cdot p\cdot p\\
	& = & p^{\ell+1}(p-1)^2.
\end{array}\] In particular, $\Aut(\l a\r)\times\Aut(\l b\r)\leqslant \Aut(G)$, so the subgroup \[ (\Aut(\l a\r)\times\Aut(\l b\r))_{p'}\cong(\ZZ_{(p-1)p^{\ell-1}}\times\ZZ_{p-1})_{p'}=\ZZ_{p-1}\times\ZZ_{p-1}\] is an abelian Hall $ p' $-subgroup of $ \Aut(G) $ of order $ (p-1)^2 $.

Let $\Phi(G)=\l a^p\r$ be the Frattini subgroup of $ G $, and let $ \ov G=G/\Phi(G) $. Consider the homomorphism
\[\s:\Aut(G)\rightarrow \Aut(\ov G)=\GL(2,p).\]

On the one hand, the kernel of $ \s $ consists of automorphisms $ g\in\Aut(G) $ which stabilizes each of the $ \Phi(G) $-cosets, and in particular, stabilizes $ \Phi(G)\l a\r $ and $ \Phi(G)\l b\r $. Thus, by direct calculation we have\,: \[a^g\in\{a,a^{p+1},a^{2p+1},\cdots,a^{(p^{\ell-1}-1)p+1}\}\mbox{\ and\ } b^g\in\{b,a^{p^{\ell-1}}b,a^{2p^{\ell-1}}b,\cdots,a^{(p-1)p^{\ell-1}}b\}.\] Consequently, ${\rm Ker}(\s)=\l x\r\times\l y\r\cong \ZZ_{p^{\ell-1}}\times \ZZ_{p},$ where \[ x: a\rightarrow a^{p+1}, b\rightarrow b\ \mbox{\,and\,}\ y:a\rightarrow a, b\rightarrow a^{p^{\ell-1}}b.\]

On the other hand, the image of $ \s $ falls into the (solvable) parabolic subgroup $ P $ of $ \Aut(\ov G)=\GL(2,p) $ which stabilizes $ \l \ov b\r $, because \[\l \ov b\r=\{\ov 1, \ov b,\cdots,\ov b^{p-1}\}=\{\Phi (G), b\Phi (G),\cdots, b^{p-1}\Phi(G)\}\] exactly consists of those elements of $ G $ of order less than $ p^\ell $. Moreover, $ {\rm Im}(\s)=P $, since $ |{\rm Im}(\s)|=|\Aut(G)|/|{\rm Ker}(\s)|=p(p-1)^2=|P| $.

Now, both $ {\rm Ker}(\s) $ and ${\rm Im}(\s) $ are solvable, so is $ \Aut(G) $.

(iii) Suppose $G=\ZZ_{p}\times\ZZ_p$. Then, $\Aut(G)=\GL(2,p)$ is of order $(p^2-1)(p^2-p)=p(p+1)(p-1)^2$, with prime divisors less than or equal to $p$. Let $ M\le\GL(2,p) $ be a subgroup of order coprime to $2p$, and let $ N=M/\Z(M) $. Then, \[ N=M/\Z(M)\leqslant M/(\Z(\GL(2,p))\cap M)\lesssim\PGL(2,p),\] and, in fact, $ N $ is embedded into $ \PSL(2,p) $, because $ 2\nmid |N| $, and meanwhile, $ \PSL(2,p) $ is a index 2 normal subgroup of $ \PGL(2,p) $. The subgroups of $ \PSL(2,p) $ are all explicitly known, by check which a subgroup of order coprime to $ 2p $ must be a cyclic group. Thus, $ N=M/\Z(M) $ is cyclic, so that, $ M $ is abelian.

(iv) Suppose $G=\l a\r{:}\l b\r=\ZZ_{p^\ell}{:}\ZZ_p$ with $ a^b=a^{p^{\ell-1}+1} $, $\ell\geqslant2$. By \cite[Thm.~3.1]{Aut-metac}, $ |\Aut(G)|=(p-1)p^{\ell+1} $, and hence the Sylow subgroup $ \Aut(G)_p $ is normal in $ \Aut(G) $. By \cite[Thm.~3.2]{Aut-metac}, there exists a cyclic Hall $ p' $-subgroup $\Aut(G)_{p'}=\l\rho\r=\ZZ_{p-1}$, where $ \rho $ stabilizes $ \l a\r $ and centralizes $ \l b\r $ as $a^\rho=a^\xi\not=a$ and $b^\rho=b$. Then, $ \Aut(G)=\Aut(G)_p{:}\Aut(G)_{p'} $ is solvable.
\end{proof}

The following corollary will be used in the next Section.

\begin{corollary}\label{lem:3gp}
	Let $ H $ be a $ 3 $-group satisfying {\rm Hypothesis~\ref{hypo-0}}. Then the following statements hold.
	\begin{itemize}
		\item[(1)] Let $ \s\in\Aut(H) $ be an involution. Then $ H{:}\l\s\r$ is isomorphic to one of $\D_{2.3^\ell} $, $ \D_{2.3^\ell}\times\ZZ_3 $, $ \ZZ_{3^\ell}\times\D_6 $, $ (\ZZ_{3^\ell}\times\ZZ_3){:}\ZZ_2 $ and $ \ZZ_{3^\ell}{:}\ZZ_6 $.\vs
		\item[(2)] If $ \Aut(H) $ contains a subgroup $ X $ such that $ X\cong\ZZ_2^2 $, then $ H=\ZZ_{3^{\ell}}\times\ZZ_3 $, and $ H{:}X=\D_{2.3^\ell}\times\D_6 $.
		\item[(3)] The group $ \Out(H) $ contains no subgroups isomorphic to $ \A_4 $.
	\end{itemize}
\end{corollary}

\begin{proof}
	By Lemma \ref{p-gp}, $H$ is one of $ \ZZ_{3^\ell} $, $ \ZZ_{3^\ell}\times\ZZ_3 $ and $ \ZZ_{3^\ell}{:}\ZZ_3 $. Let $ \s\in\Aut(H) $ be an involution.
	
	(i) If $ H=\ZZ_{3^\ell} $, then $ \Aut(H)=\ZZ_{2.3^{\ell-1}} $ contains only one involution, $ H{:}\l\s\r=\D_{2.3^{\ell}} $, and $ \Out(H)=\Aut(H) $ contains no subgroups isomorphic to $ \ZZ_2^2 $ or $ \A_4 $.
	
	(ii) If $ H=\ZZ_{3^\ell}{:}\ZZ_3=\l a\r{:}\l b\r $ is non-abelian as given in Lemma \ref{p-gp}~(3), then $ \Aut(H) $ is solvable of order $ 2.3^{\ell+1} $, so $ \l\s\r $ is a Hall $ 3' $-subgroup of $ \Aut(H) $ of order 2, and it is conjugate to the following one: \[\mbox{$ \l\rho\r=\ZZ_2 $, with $ (a,b)^\rho=(a^{-1},b) $.}\] Thus, $ H{:}\l\s\r\cong H{:}\l\rho\r=\D_{2.3^\ell}{:}\ZZ_3=\ZZ_{3^\ell}{:}\ZZ_6 $. Meanwhile, since $|\Aut(H)|$ is not divisible by 4, $ \Aut(H) $ has no subgroups isomorphic to $ \ZZ_2^2 $, and $ \Out(H) $ has no subgroups isomorphic to $ \A_4 $.
	
	(iii) If $ H=\ZZ_{3}\times\ZZ_3 $, then $ \Aut(H)=\Out(H)$, and $\Aut(H)=\GL_2(3)\cong\Q_8{:}\S_3 $ is solvable, containing one central involution and twelve non-central involutions, with the non-central ones conjugate to the element with diagonal matrix $ (-1,1) $. Hence, $ H{:}\l\s\r$ is isomorphic to $ (\ZZ_{3}\times\ZZ_3){:}\ZZ_2 $ or $ \D_6\times\ZZ_3 $, where in the former case, $ \Z(\GL_2(3))=\ZZ_2 $ acts on $ \ZZ_{3}\times\ZZ_3 $ by mapping each element to its inverse.
	
	Let $ X\le\Aut(H) $ be a subgroup with $ X\cong\ZZ_2^2 $. Then, $ X\cap\Q_8>1 $, so  $ X $ contains the central involution with diagonal matrix $ (-1,-1) $, and $ X $ contains other two non-central involutions conjugate to $ (-1, 1)$ and $(1,-1) $. Hence $ H{:}X=\D_6\times\D_6 $. Moreover, if $ \Aut(H) $ contains a subgroup isomorphic to $ \A_4\cong\ZZ_2^2{:}\ZZ_3 $, then $ \ZZ_3 $ acts on $ \ZZ_2^2 $ by moving the central involution, not possible.\vs
	
	(iv) At last, suppose $ H=\ZZ_{3^\ell}\times\ZZ_3=\l a,b\r $, where $ |a|=3^{\ell}>3 $, $ |b|=3 $. Then, $ \Aut(H)=\Out(H) $. Let $ X\leqslant\Aut(H) $ be a subgroup with $ X\cong\ZZ_2^2 $. By Lemma \ref{p-gp}~(2), $ \Aut(H) $ is solvable of order $ 4.3^{\ell+1} $, so $ X $ is a Hall $ 3' $-subgroup of $ \Aut(H) $ of order $ 4 $, and it is conjugate to the following one: \[ M=(\Aut(\l a\r)\times\Aut(\l b\r))_{3'}=\ZZ_2\times\ZZ_2=\l x,y\r,\] where $(a,b)^x=(a^{-1},b) $ and $ (a,b)^y=(a,b^{-1}) $. Thus, $ H{:}X\cong H{:}M=\D_{2.3^\ell}\times\D_6 $. In particular, the involution $ \s\in\Aut(H) $ is contained in some Hall $ 3' $-subgroup, thus conjugate to $ x $, $ y $ or $ xy $; respectively, $ H{:}\l\s\r$ is isomorphic to $ \D_{2.3^\ell}\times\ZZ_2 $, $ \ZZ_{3^\ell}\times\D_6 $ or $ (\ZZ_{3^\ell}\times\ZZ_3){:}\ZZ_2 $.
	
    At last, let $ g\in\N_{\Aut(H)}(M) $. Note that, $ M $ stabilizes subgroups $ \l a\r $ and $ \l b\r $, and meanwhile it cannot stabilize
	\begin{itemize}
		\item[-] any of the other two subgroups of order $ 3^\ell $, i.e., $ \l ab\r$, $\l ab^{-1}\r $, and
		\item[-] any of the other two non-characteristic subgroups of $ 3 $, i.e., $ \l a^{3^{\ell-1}}b\r $, $ \l a^{-3^{\ell-1}}b\r $.
	\end{itemize}
	Now, $ M=M^g $ stabilizes $ \l a\r^g$ and $\l b\r^g $, which implies $ \l a\r^g=\l a\r $, $ \l b\r^g=\l b\r $, so that, $ \N_{\Aut(H)}(M)\leqslant \Aut(\l a\r)\times\Aut(\l b\r) $ is abelian. Since any subgroup $ Y\leqslant \Aut(H) $ with $ Y\cong \ZZ_2^2 $ is conjugate to $M$, the normalizer $ \N_{\Aut(H)}(Y) $ is conjugate to $ \N_{\Aut(H)}(M) $, so it is abelian, too. Consequently, $ \Aut(H) $ has no subgroup isomorphic to $ \A_4 $.
\end{proof}

\section{Proof of Theorem~\ref{thm:sfEul-gps}}\label{sec:thm-1}

In this section, we give a proof of Theorem~\ref{thm:sfEul-gps}, by studying the way how the $ p $-groups compose a larger solvable group.

Let $ G $ be a finite group, with $ \{p_1,\ldots,p_s\} $ the prime divisors of $ |G| $. For $ p=p_i $, denote by $ \O_p(G) $ the \textit{maximal normal $ p $-subgroup} of $ G $, or equivalently, the intersection of all Sylow $ p $-subgroups of $ G $. Let $ F=\Fit(G) $ be the \textit{Fitting subgroup} of $G$, which is the unique largest nilpotent normal subgroup. Then,
\[F=F_{p_1}\times\cdots\times F_{p_s}, \mbox{\ with\ each\ } F_{p_i}=\O_{p_i}(G).\]  Note that, both $ \O_p(G) $ and $\Fit(G)$ are characteristic in $ G $.
Further, if $ G $ is solvable, then $\C_G(F)\le F$, and so
\[G/F=\N_G(F)/\left(F\C_G(F)\right)\lesssim\Out(F).\]\vs

We first consider solvable groups of odd order.

\begin{lemma}\label{lem:p-normal}
	Let $G$ be a solvable group of odd order satisfying {\rm Hypothesis~$\ref{hypo-0}$}. Let $p$ be the largest prime divisor of $|G|$. Then:
	\begin{itemize}
		\item[\rm (1)] a Sylow p-subgroup $ G_p $ is normal in $ G $, and hence $ G=G_p{:}G_{p'}\, ;$\vs
		\item[\rm (2)] if $G$ is a $\{p,q\}$-group, with q a prime such that $ q<p $ and $q\nmid p^2-1$,
		then $ G=G_p\times G_q $.
	\end{itemize}
\end{lemma}

\begin{proof}
	(1) Let $ \ov G=G/\O_p(G) $. Note that, $ \ov G $ has no normal $ p $-subgroup. Thus, $ F:=\Fit(\ov G)=F_{p_1}\times\cdots\times F_{p_s} $ with all $ p_i<p $, and \[\ov G/F\lesssim \Out(F)=\Out(F_{p_1})\times\cdots\times\Out(F_{p_s}).\] By Lemma \ref{p-gp}, $ p\nmid |\Out(F_{p_i})| $ as $ p>p_i $. Then, $ p\nmid |\ov G/F| $. It follows that $ p\nmid |\ov G| $, so $ G_p=\O_p(G)\lhd G $ and $ G=G_p{:}G_{p'} $.
	
	(2) Suppose that $ G $ is a $ {\{p,q\}} $-group with $ p>q $ and $q\nmid p^2-1$. By (1), we have $G=G_p{:}G_q$. Meanwhile, $ H:=G/\O_q(G) $ has no normal $ q $-subgroup, so $ \Fit(H) $ is a $ p $-group, and $ H/\Fit(H)\lesssim\Out(\Fit(H))$. By Lemma \ref{p-gp}, any prime divisor of $ |\Out(\Fit(H))| $ is a divisor of $ p(p^2-1) $. Then, $ q\nmid|H/\Fit(H)| $ as $q\nmid p(p^2-1)$. It follows that $ q\nmid|H|$, so $ G_q=\O_q(G)\lhd G $ and $ G=G_p\times G_q $.
\end{proof}

Before giving a general characterization of solvable groups of odd order, we need the following fact.

\begin{lemma}\label{lem0}
	Let $ H=\ZZ_{p^\ell} $, $ 1\ne x\in\Aut(H) $ be an automorphism with $ \gcd(|x|,p)=1 $, and $ X=H{:}\l x\r $. Then, {\rm (i)} $ H\leqslant X'\,; $ {\rm (ii)} $ H\cap \Z(X)=1 $.

\end{lemma}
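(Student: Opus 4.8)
The plan is to exploit that $x$ is a nontrivial automorphism of $H=\ZZ_{p^\ell}=\l a\r$ of order coprime to $p$, so it acts nontrivially already on the top quotient $H/\Phi(H)=\ZZ_p$. Write $a^x=a^r$ for some integer $r$ with $\gcd(r,p)=1$; since $|x|$ is coprime to $p$ and $\Aut(\ZZ_{p^\ell})\cong\ZZ_{p-1}\times\ZZ_{p^{\ell-1}}$, the image of $x$ lies in the $\ZZ_{p-1}$ factor, hence $r\not\equiv 1\pmod p$ (otherwise $x$ would be a $p$-element, forcing $x=1$). This congruence $r\not\equiv 1\pmod p$ is the one fact that drives both parts.

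For (i), I would show $a\in X'$ by computing a commutator: $[a,x]=a^{-1}a^x=a^{r-1}$. Since $r-1$ is coprime to $p$ (as $r\not\equiv 1\pmod p$), $a^{r-1}$ generates $H$, so $H=\l a^{r-1}\r\leqslant X'$. That is immediate.

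For (ii), suppose $a^k\in\Z(X)$ for some $k$; then $a^k$ is centralized by $x$, i.e. $a^{kr}=a^k$, so $a^{k(r-1)}=1$, meaning $p^\ell\mid k(r-1)$. Since $\gcd(r-1,p)=1$, this forces $p^\ell\mid k$, i.e. $a^k=1$. Hence $H\cap\Z(X)=1$.

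I do not anticipate a genuine obstacle here; the only point needing care is the justification that an automorphism of $\ZZ_{p^\ell}$ of order coprime to $p$ which fixes $H/\Phi(H)$ pointwise must be trivial — equivalently that the unique subgroup of $\Aut(\ZZ_{p^\ell})$ of $p$-power order is the kernel of reduction mod $p$. This follows from the structure $\Aut(\ZZ_{p^\ell})\cong\ZZ_{(p-1)p^{\ell-1}}$ recalled in Lemma~\ref{p-gp}(1), so I would simply cite that. Everything else is the two short computations above.
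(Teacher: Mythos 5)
Your proof is correct and follows essentially the same route as the paper: both reduce everything to the key congruence $m\not\equiv 1\pmod p$ for $a^x=a^m$, then get (i) from $[a,x]=a^{m-1}$ generating $H$ and (ii) from $a^{k(m-1)}=1$ forcing $p^\ell\mid k$. The only difference is how that congruence is justified — the paper derives it from $p^\ell\mid m^{p-1}-1$ via the lifting-the-exponent lemma, while you read it off from the fact that the kernel of reduction mod $p$ in $\Aut(\ZZ_{p^\ell})\cong\ZZ_{(p-1)p^{\ell-1}}$ is exactly the Sylow $p$-subgroup; both are valid.
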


\begin{proof}
Let $ H=\l a\r $. By Lemma \ref{p-gp}~(1), $ |x| $ divides $ p-1 $, and thus we can suppose \[\mbox{$ a^x=a^m $,\ where $ 1<m\leqslant p^\ell-1 $,\, $p\nmid m$\, and\, $ p^\ell\mid m^{p-1}-1 $.}\] If $ p\mid m-1 $, then by {\sf LTE} (Lift the Exponent Lemma), we have $ p^\ell\mid m-1 $, so $ a^x=a^m=a $ and $ x=1 $, a contradiction. Hence $ p\nmid m-1 $. Thus, (i) the commutator $ [a,x]=a^{-1}a^x=a^{m-1}$ can generate $ H $, so that, $ H\leqslant X' $\,; (ii) for any $ 1\leqslant k<p^\ell $, $ (a^k)^x=a^{km}\ne a^k $, so that, $ H\cap \Z(X)=1 $.
\end{proof}

\begin{lemma}\label{lem:abel-G_2}
Let $G$ be a solvable group of odd order satisfying {\rm Hypothesis~$\ref{hypo-0}$}.
Then $G=A{:}B$, where $A$ is abelian, $B$ is nilpotent, and
$A\cap \Z(G)=1$.
\end{lemma}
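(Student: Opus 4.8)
The plan is to induct on $|G|$ (or on the number of prime divisors of $|G|$) and to pick off the largest prime $p$ dividing $|G|$. By Lemma~\ref{lem:p-normal} we have $G=G_p{:}G_{p'}$ with $G_p=\O_p(G)$ normal. By Lemma~\ref{class-closed}, $G_{p'}$ again satisfies Hypothesis~\ref{hypo-0} and has odd order, so by induction $G_{p'}=C_0{:}D_0$ with $C_0$ abelian, $D_0$ nilpotent, and $C_0\cap\Z(G_{p'})=1$. The natural candidate is to build $C$ and $D$ from $G_p$, $C_0$ and $D_0$. The case analysis is driven by Lemma~\ref{p-gp}: $G_p$ is one of $\ZZ_{p^\ell}$, $\ZZ_{p^\ell}\times\ZZ_p$, or the non-abelian $\ZZ_{p^\ell}{:}\ZZ_p$. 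When $G_p$ is abelian we want to absorb it into $C$; when $G_p$ is the non-abelian metacyclic group of Lemma~\ref{p-gp}(3), note $G_p=\l a\r{:}\l b\r$ with $\l a\r=\ZZ_{p^\ell}$ characteristic in $G_p$ (hence normal in $G$) and $G_p/\l a\r=\ZZ_p$ central-ish, so we should plan to take $C\supseteq\l a\r$ and push the $\ZZ_p$ quotient into $D$.

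The key structural input is the action of $G_{p'}$ on $G_p$, controlled by $G_{p'}/\C_{G_{p'}}(G_p)\hookrightarrow\Out(G_p)$, whose order (Lemma~\ref{p-gp}) involves only primes $\le p$ and, in the non-split-into-direct-product cases, forces much of $G_{p'}$ to centralize $G_p$. First I would isolate the abelian characteristic subgroup $A_p$ of $G_p$ that is destined for $C$: this is $G_p$ itself when $G_p$ is abelian, and $\l a\r$ when $G_p$ is non-abelian. Set $C=A_p\times C_0$ (using $\gcd(|A_p|,|C_0|)=\gcd(p^\bullet,|G_{p'}|)=1$, so the product is direct as abstract groups and a normal subgroup of $G$ once I check $C_0$'s normality is inherited appropriately — more precisely I will arrange $C$ as a normal abelian subgroup by taking $C=\O_{\{p\}\cup\sigma}(G)$ for a suitable set $\sigma$, or by a direct verification). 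Then $D$ is the complement: $D_0$ together with the residual $\ZZ_p$ (present only in the non-abelian or $\ZZ_{p^\ell}\times\ZZ_p$ cases), and $D$ is nilpotent because $D_0$ is, the extra factor is a $p$-group, $\gcd$ is $1$, and — crucially — the extra $p$-part commutes with $D_0$. That last commuting statement is exactly where Lemma~\ref{p-gp}'s bound on $|\Out(G_p)|$ and the oddness of the order are used: a Hall $p'$-subgroup of $\Aut$ of the relevant $p$-group is cyclic/small and its action is pinned down, so any nontrivial $q$-part of $D_0$ ($q<p$) either centralizes $G_p$ entirely or lands in an abelian piece we can rearrange.

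The condition $C\cap\Z(G)=1$ is the delicate point and I expect it to be the main obstacle. For the $C_0$ part it follows from the inductive hypothesis $C_0\cap\Z(G_{p'})=1$ once I check that an element of $C_0$ centralizing all of $G$ in particular centralizes $G_{p'}$ (automatic) — but I must also rule out new central elements created by the $G_p$-side of the action; here coprimality helps, since $\Z(G)=\Z(G)_p\times\Z(G)_{p'}$ and the two intersections with $C=A_p\times C_0$ split. For the $A_p$ part I would invoke Lemma~\ref{lem0}: whenever some $p'$-element $x\in G_{p'}$ acts nontrivially on the cyclic $p$-group $\l a\r$, we get $\l a\r\cap\Z(\l a\r{:}\l x\r)=1$, hence $A_p\cap\Z(G)=1$. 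The genuinely worrying sub-case is when $G_{p'}$ (or rather its image in $\Out(G_p)$) is trivial, i.e.\ $G_p$ is central — but then $G=G_p\times G_{p'}$ is a direct product, $\Z(G)\supseteq G_p$, and the claimed conclusion $C\cap\Z(G)=1$ would fail unless we move $G_p$ out of $C$ into $D$. So the final shape of the argument must be: \emph{if} $G_{p'}$ acts nontrivially on $G_p$, put the abelian characteristic part of $G_p$ into $C$ and use Lemma~\ref{lem0}; \emph{if} $G_p$ is centralized, absorb all of $G_p$ into $D$ (legitimate since $G_p$ abelian or the non-abelian metacyclic group is then nilpotent-friendly, and $\gcd$ with $|D_0|$ is $1$), whence $C=C_0$ and the inductive hypothesis finishes it. Checking that these two branches are exhaustive and that $D$ stays nilpotent in the second branch (where $D_0$ might act on $G_p$... but it cannot, $G_p$ is central) is the bookkeeping I would need to do carefully.
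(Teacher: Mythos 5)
Your overall strategy matches the paper's: induct on the number of prime divisors, peel off the largest prime $p$ via Lemma~\ref{lem:p-normal}, split according to whether $G_{p'}$ centralizes $G_p$, absorb $G_p$ into $D$ in the central case, and otherwise place the abelian characteristic part of $G_p$ into $C$ and invoke Lemma~\ref{lem0}(ii) for $C\cap\Z(G)=1$. However, there is a genuine gap at the central step. You set $C=A_p\times C_0$ and justify the directness (hence abelianness) of this product by $\gcd(|A_p|,|C_0|)=1$. Coprimality only gives $A_p\cap C_0=1$; it does not give $[A_p,C_0]=1$, and a priori the abelian part $C_0$ of $G_{p'}$ can act nontrivially on $G_p$ (for instance, $\Aut(\ZZ_{p^\ell})$ has order divisible by every prime divisor of $p-1$, so a Sylow $r$-subgroup of $C_0$ with $r\mid p-1$ could act faithfully). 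Proving $[G_p,C_0]=1$ is precisely the ``Claim'' that occupies most of the paper's proof, and the mechanism is not the one you gesture at: it combines (a) the inductive condition $C_0\cap\Z(G_{p'})=1$, which forces, for each Sylow $r$-subgroup $A_r$ of $C_0$, some element $x$ of $D_0$ of order coprime to $r$ to act nontrivially on $A_r$ (otherwise $A_r{:}(D_0)_r$ would contribute a nontrivial element of $A_r\cap\Z(G_{p'})$); (b) Lemma~\ref{lem0}(i), giving $A_r\le X'$ for $X=A_r{:}\l x\r$; and (c) the fact from Lemma~\ref{p-gp} that Hall $p'$-subgroups of $\Aut(G_p)$ are abelian, so that $X'$, hence $A_r$, lies in the kernel of the action on $G_p$. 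Without this argument your $C$ need not be abelian (nor even a direct product), and the proof does not close.

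A smaller but real omission: your dichotomy ``$G_{p'}$ acts nontrivially on $G_p$, so put the abelian characteristic part of $G_p$ into $C$'' versus ``$G_p$ is central, so put it into $D$'' misses the mixed sub-case $G_p=\l a\r\times\l b\r=\ZZ_{p^\ell}\times\ZZ_p$ in which $D_0$ normalizes both factors, acts nontrivially on $\l a\r$ but trivially on $\l b\r$. There $\l b\r\le\Z(G)$ once $[G_p,C_0]=1$ is known, so placing all of $G_p$ into $C$ violates $C\cap\Z(G)=1$; the paper repairs this by taking $C=C_0\times\l a\r$ and $D=D_0\times\l b\r$, exactly as it does in the non-abelian metacyclic case. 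Your treatment of that last case and of the $\Z(G)$ condition via Lemma~\ref{lem0}(ii) is otherwise in line with the paper.
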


\begin{proof}

The proof proceeds by induction on $ n $, the number of prime divisors of $ |G| $. For $ n=1 $, the lemma holds with $ A=1 $, $ B=G $.
Suppose $n\geqslant2$, and assume inductively that the lemma holds for groups of order with less prime divisors.

Let $ p $ be the largest prime divisor of $ |G| $.
By Lemma~\ref{lem:p-normal} and the induction assumption, we have $$ G=G_p{:}G_{p'}=G_p{:}(C{:}D), $$ with $ C $ abelian, $ D $ nilpotent, and $C\cap \Z(C{:}D)=1$, so that, $ D $ does not
centralize any Sylow subgroup of $ C $.

If $G=G_p\times G_{p'}$, then
$$G=G_p{\times}(C{:}D)=(G_p{\times}C){:}D=C{:}(G_p\times D),$$
which gives an expected factorization of $ G $, with $ A=C $, $ B=G_p\times D $.

We thus assume that $G\not=G_p\times G_{p'}$, namely, $G_{p'}$ acts by conjugation on $G_p$ non-trivially. We shall confirm the following claim.

{\bf Claim:} $G_p$ is centralized by each Sylow $r$-subgroup $C_r$ of $C$, so that, $G_p$ is centralized by $ C $.

By Lemma~\ref{p-gp}, we have two cases to treat: $C_r=\ZZ_{r^k}$ or $\ZZ_{r^k}\times\ZZ_r$.

{\bf Case 1.}\ Let $ C_r=\ZZ_{r^k}$. If $[C_r,D_{r'}]=1$, since $ [C_r,D]\ne 1 $, then $ (C{:}D)_r=C_r{:}D_r=\ZZ_{r^k}{:}\ZZ_{r} $ is non-abelian as given in Lemma \ref{p-gp}~(3), and so $$1\not=C_r\cap \Z(C_r{:}D_r)=C_r\cap\Z(C{:}D) \leqslant C\cap\Z(C{:}D),$$ a contradiction.
Thus, there exists a Sylow $q$-subgroup $D_q$ with $q\not=r$ which does not centralize $C_r$. Let $ x \in D_q$ act non-trivially on $ C_r $, and let \[ X=C_r{:}\l x\r.\]
Then, $ X $ acts on $G_p$ with $ \ov X=X/\C_{X}(G_p) \lesssim \Aut(G_p)$. Note that, $ |X| $ is coprime to $ 2p $, so is $ |\ov X| $. By Lemma~\ref{p-gp}, either $ \Aut(G_p)=\GL_2(p) $, or $ \Aut(G_p) $ is solvable with an abelian Hall $ p' $-subgroup, while both cases imply that $ \ov X $ is abelian, that is, $ X'\leqslant \C_{X}(G_p) $. At last, by Lemma \ref{lem0}~(i), we have $ C_r\leqslant X' $, and so $ C_r\leqslant\C_{X}(G_p) $.

{\bf Case 2.}\ Let $ C_r=\ZZ_{r^k}\times\ZZ_{r}$, where $k\geqslant1$.
In this case, $ C_r=(C{:}D)_r $, namely, $r$ does not divide $|D|$.
It follows by Lemma~\ref{p-gp}~(2) that, either
\begin{itemize}
	\item[-] $C_r=\ZZ_r^2$ and $D$ is irreducible on $C_r$; or\vs
	\item[-] there exists $a,b\in C_r$ such that $C_r=\l a\r\times \l b\r$, and both $\l a\r$ and $\l b\r$ are normalized by $D$.
\end{itemize}

For the former, let $ x\in D$ act non-trivially on $ \ZZ_r^2 $, and let $X=\ZZ_r^2{:}\l x\r$, which has $ \ZZ_r^2 $ as a minimal normal subgroup; for the latter, there exist $u,v\in D$ respectively acts on $\l a\r$, $\l b\r$ non-trivially, so let $ Y=\l a\r{:}\l u\r $, $ Z=\l b\r{:}\l v\r $. Note that, each of $ X,Y,Z $ induces some automorphism group of $ G_p $ of order coprime to $ 2p $. It is similar to {\bf Case 1} to show $[G_p,C_r]=1$.

We have therefore confirmed the claim. Consequently, we have
\[G=G_p{:}(C{:}D)=(G_p\times C){:}D.\]
We then find an expected factorization of $ G $. By assumption, $ G_{p'}=C{:}D $ acts by conjugation on $G_p$ non-trivially. Now, as $ C $ acts on $ G_p $ trivially, $ D $ acts on $G_p$ non-trivially, so it induces some automorphism of $ G_p $ of order coprime to $ p $.

By Lemma \ref{p-gp}, we have three cases to treat:

(1) Suppose $ G_p=\ZZ_{p^\ell} $. By Lemma \ref{lem0}~(ii), any non-identity element of $ G_p $ cannot fixed by $ D $. Thus $G=A{:}B$, with $A=G_p\times C$, $B=D$, satisfies the theorem.

(2) Suppose $ G_p=\ZZ_{p^\ell}{:}\ZZ_{p} $ as given in Lemma \ref{p-gp}~(3). Then there exist $ a,b\in G_p $ such that $ G_p=\l a\r{:}\l b\r $, $ \l a\r $ is normalized by $ D $, and $ \l b\r $ is centralized by $ D $. Rewrite $ G $ as
\[G=(C\times(\l a\r{:}\l b\r)){:}D=(C\times\l a\r){:}(D\times \l b\r).\]
By Lemma \ref{lem0}~(ii), $G=A{:}B$, with $A=C\times \l a\r$, $B=D\times\l b\r$, satisfies the theorem.

(3) Suppose $ G_p=\ZZ_{p^\ell}\times\ZZ_{p} $. It follows by Lemma~\ref{p-gp}~(2) that, either $G_p=\ZZ_p^2$ and $D$ is irreducible on $G_p$; or there exist $a,b\in G_p$ such that $G_p=\l a\r\times \l b\r$, and both $\l a\r$, $\l b\r$ are normalized by $D$. For the former, $G=A{:}B$ satisfies the theorem, with $A=G_p\times C$, $B=D$. For the latter, if $ D $ acts non-trivially on both $ \l a\r $ and $ \l b\r $, then also let $A=G_p\times C$, $B=D$; otherwise, $ D $ acts non-trivially on some one of $ \l a\r $ and $ \l b\r $, say $ \l a\r $, so let $A=C\times \l a\r$, $B=D\times\l b\r$.
\end{proof}

We next deal with solvable groups of even order.

\begin{proposition}\label{2p-gp}
	Let $K$ be a $\{2,p\}$-group with $p$ odd prime satisfying {\rm Hypotheses~$\ref{hypo-0}$}.
	Assume that $K\ne K_p{:}K_2$. Then $ p\in\{3,7\} $ and one of the following holds.
	\begin{enumerate}
		\item[\rm (1)] $K=\ZZ_2^3{:}K_7$,\, $\ZZ_2^2{:}K_3 $,\, $ \ZZ_2\times(\ZZ_2^2{:}K_3) $,\, $ \Q_8{:}K_3$ or $\ZZ_4\circ_{\ZZ_2} (\Q_8{:}K_3)\,;$\vs
		\item[\rm (2)] $K$ is one of the $ \{2,3\} $-groups: $ X $, $ \ZZ_2\times X $, $ Y $, $ \ZZ_4\circ_{\ZZ_2} Y $ and $ W $, where $ X,Y,W $ are given in {\rm Table \ref{tab-main}}, $ X $ is homomorphic to $\ZZ_2^2{:}\S_3\cong\S_4$, $ Y $ is homomorphic to $\Q_8{:}\S_3\cong\GL_2(3)$, and $ W $ is homomorphic to $ \ZZ_2^2{:}\ZZ_3\cong\A_4 $, $ \S_4 $, $\ZZ_4\circ\SL_2(3)$ or $\ZZ_4\circ\GL_2(3)$.
	\end{enumerate}
\end{proposition}

We remark that, the proof of Proposition \ref{2p-gp} has been moved to the end of this Section, as it is rather lengthy.

\begin{lemma}\label{lem:G_{237}}
	Let $K$ be a $\{2,3,7\}$-group satisfying {\rm Hypotheses~$\ref{hypo-0}$}, with neither $ K_3 $ nor $ K_7 $ normal in $ K $.
	Then $K=K_2{:}(K_7{:}K_3)=\ZZ_2^3{:}(K_7{:}K_3)$. 
\end{lemma}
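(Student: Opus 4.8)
The plan is to work first at the level of Hall subgroups to pin down $K_2\cong\ZZ_2^3$ together with the action of $K_7$ on it, then to upgrade this to $\O_2(K)=K_2$, and finally to split off the normal Sylow $2$-subgroup. Throughout $K$ is solvable, and by Lemma~\ref{class-closed} every subgroup and quotient formed below still satisfies Hypothesis~\ref{hypo-0}.

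By Hall's theorems I may choose the Hall subgroups so that $K_{\{2,7\}}$ and $K_{\{3,7\}}$ share a common Sylow $7$-subgroup $K_7$. Since $K_{\{3,7\}}$ has odd order with largest prime divisor $7$, Lemma~\ref{lem:p-normal} gives $K_{\{3,7\}}=K_7{:}K_3$. If $K_7$ were normal in $K_{\{2,7\}}$ it would be normalized by $\l K_{\{2,7\}},K_{\{3,7\}}\r$, which contains $K_2,K_3,K_7$ and so equals $K$, against $K_7\not\lhd K$. Hence $K_{\{2,7\}}\ne K_7{:}K_2$, and as $7\ne 3$ Lemma~\ref{lem:{p,q}-gps} leaves only $K_{\{2,7\}}=\ZZ_2^3{:}K_7$; in particular $K_2=\ZZ_2^3$. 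Moreover $K_7$ cannot centralize $K_2$ (otherwise $K_{\{2,7\}}=K_2\times K_7$ and again $K_7\lhd K_{\{2,7\}}$), so its image in $\Aut(K_2)=\GL_3(2)$ is a non-trivial $7$-subgroup, i.e.\ a Singer cycle $\ZZ_7$; since $7\mid 2^3-1$ while $7\nmid 2^2-1$, such an element has irreducible characteristic polynomial, whence $\C_{K_2}(K_7)=1$ and $K_7$ acts irreducibly on $K_2$.

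Next I would show $\O_2(K)=K_2$. Put $F_2=\O_2(K)\le K_2\cong\ZZ_2^3$ of rank $r$; it suffices to rule out $1\le r\le 2$ and $r=0$. If $1\le r\le 2$, then $K/\C_K(F_2)\lesssim\Aut(F_2)=\GL_r(2)$ has order dividing $6$, so $K_7\le\C_K(F_2)$; but $K_7$ normalizes $K_2$ with $\C_{K_2}(K_7)=1$, contradicting $1\ne F_2\le\C_{K_2}(K_7)$. If $r=0$, let $F=\Fit(K)=F_3\times F_7$, of odd order with $\C_K(F)\le F$ by solvability (so $F\ne 1$); then $F_7\ne 1$, for otherwise $F=F_3$ is a non-trivial $3$-group and $7$ divides $|K|/|\C_K(F)|$, hence $|\Aut(F_3)|$, impossible since $|\Aut(F_3)|$ has no prime divisor exceeding $3$ (Lemma~\ref{p-gp}). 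Now $\C_{K_2}(F_7)\le K_2$ is $K_7$-invariant because $F_7\lhd K$, hence equals $1$ or $K_2$ by irreducibility; and $K_2/\C_{K_2}(F_7)$ embeds into $\Aut(F_7)$, whose elementary abelian $2$-subgroups have rank $\le 2$ (a short check of the list in Lemma~\ref{p-gp}), so $\C_{K_2}(F_7)$ has rank $\ge 1$, forcing $\C_{K_2}(F_7)=K_2$. Thus $K_2$ centralizes $F_7$; since $K_2$ acts faithfully on $F=F_3\times F_7$ it then embeds into $\Aut(F_3)$, again absurd as $\Aut(F_3)$ has no elementary abelian $2$-subgroup of rank $3$ (if $F_3=1$ this just reads $K_2=1$). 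Hence $r=3$, $F_2=K_2$, and $K_2\lhd K$. Finally $K/K_2$ has odd order with largest prime $7$, so its Sylow $7$-subgroup $\cong K_7$ is normal (Lemma~\ref{lem:p-normal}) and $K/K_2\cong K_7{:}K_3$; the Hall subgroup $K_{\{3,7\}}=K_7{:}K_3$ meets $K_2$ trivially and has order $|K:K_2|$, so it complements $K_2\lhd K$, giving $K=K_2{:}(K_7{:}K_3)=\ZZ_2^3{:}(K_7{:}K_3)$.

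The hard part will be the equality $\O_2(K)=K_2$, specifically excluding $\O_2(K)=1$: this is exactly where solvability enters, through $\C_K(F)\le F$ --- indeed, without solvability $\PSL_2(7)=\GL_3(2)$ satisfies every hypothesis yet is not of the asserted form. The auxiliary fact that $\Aut$ of an odd $p$-group from Lemma~\ref{p-gp} has no elementary abelian $2$-subgroup of rank exceeding $2$ is itself a brief case analysis: $\Aut(\ZZ_{p^\ell})$ and $\Aut(\ZZ_{p^\ell}{:}\ZZ_p)$ have cyclic Sylow $2$-subgroups; for $\ZZ_{p^\ell}\times\ZZ_p$ with $\ell\ge 2$ a $2$-subgroup lies in a Hall $p'$-subgroup $\cong\ZZ_{p-1}^2$; and a commuting set of involutions in $\GL_2(p)$ is simultaneously diagonalizable over $\overline{\FF}_p$, hence of rank $\le 2$.
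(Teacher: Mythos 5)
Your proof is correct, and its opening reduction is exactly the paper's: fix a common $K_7$, get $K_{\{3,7\}}=K_7{:}K_3$ from Lemma~\ref{lem:p-normal}, rule out $K_7\lhd K_{\{2,7\}}$ by the normalizer argument, read off $K_{\{2,7\}}=\ZZ_2^3{:}K_7$ from Lemma~\ref{lem:{p,q}-gps}, and finish through $\C_K(\Fit(K))\le\Fit(K)$. Where you diverge is the central step $K_2\lhd K$: you run a case analysis on the rank $r$ of $\O_2(K)$, killing $r=1,2$ via $|\GL_r(2)|$ and $r=0$ by bounding the elementary abelian $2$-rank of $\Aut(F_7)$ and of $\Aut(F_3)$. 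The paper short-circuits most of this: since $F_7\le K_7$ normalizes $K_2$ while $K_2$ normalizes $F_7\lhd K$, it gets $[K_2,F_7]\le K_2\cap F_7=1$ for free, so only the non-embedding $\ZZ_2^3\not\lesssim\Aut(F_3)$ is needed before concluding $K_2\le\C_K(F)\le F$, hence $K_2=\O_2(K)\lhd K$. Thus the paper never touches $\Aut(F_7)$ and never separates out the intermediate ranks; your extra cases are sound but avoidable, and your closing remark that solvability enters precisely through $\C_K(F)\le F$ (with $\GL_3(2)$ as the non-solvable obstruction) matches the role that inclusion plays in the paper's version.
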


\begin{proof}
	By Lemma \ref{lem:p-normal}, $ K_{\{3,7\}}= K_{7}{:}K_{3} $. If $ K_{\{2,7\}}= K_{7}{:}K_{2} $, then $ K_7 $ is normal in $ K $, not in our case. Thus, $ K_{\{2,7\}}\ne K_{7}{:}K_{2} $, and by Proposition \ref{2p-gp}, we have \[ K_{\{2,7\}}=K_2{:}K_7=\ZZ_2^3{:}K_7.\]
	
	We claim that, $ K_2=\ZZ_2^3 $ centralizes the Fitting group $ F=\Fit(K)=F_2\times F_3\times F_7 $. If this is true, then we are done, since $ K_2\leqslant C_{K}(F)\leqslant F $, so  $ K_2=F_2\lhd K $, and \[K=K_2{:}K_{\{3,7\}}=K_2{:}(K_7{:}K_3)=\ZZ_2^3{:}(K_7{:}K_3).\]
	
	It follows by $ F_2\le K_2=\ZZ_2^3 $ that $ K_2 $ centralizes $ F_2 $. Meanwhile, $ K_2 $ is normalized by $ K_7 $, so $ K_2 $ and $ F_7 $ normalizes, and hence centralizes, each other. The claim is then proved if $ K_2 $ centralizes $ F_3 $. Suppose that $ K_2 $ doe not centralize $ F_3 $. Consider the subgroup $ F_3{:}(K_2{:}K_7)\leqslant K $. Then $ K_2=\ZZ_2^3 $ as a minimal normal subgroup of $ K_2{:}K_7 $ is embedded into  $\Aut(F_3) $. By Lemma \ref{p-gp}, $ |\Aut(F_3)|$ being divisible by 8 only happens when $ F_3=\ZZ_3^2 $, $\Aut(F_3)=\GL_2(3)\cong\Q_8{:}\S_3 $. However, $ \GL_2(3) $ has no subgroup isomorphic to $ \ZZ_2^3 $, a contradiction.
\end{proof}

Now, we are ready to prove Theorem~\ref{thm:sfEul-gps}.

{\bf Proof of Theorem~\ref{thm:sfEul-gps}\,:}

Let $G$ be a finite solvable group of which each Sylow subgroup has a cyclic or dihedral subgroup of prime index. Then, the Sylow subgroups $ G_2 $ and $ G_p $ ($ p>2 $) are respectively given in Proposition \ref{2-gp} and Lemma \ref{p-gp}.

Let $H\le G$ be the largest normal Hall subgroup of odd order, with $\pi=\pi(H)$.
Let $K$ be a Hall $\pi'$-subgroup of $G$, where $\pi'=\pi(G)\setminus\pi(H)$.
Then, we have\,: $$G=H{:}K.$$

Since $ H $ is of odd order, by Lemma~\ref{lem:abel-G_2}, $H=A{:}B$ such that $ A $ is abelian, $B$ is nilpotent, and $A\cap\Z(H)=1$. In particular, if $ G $ is of odd order, the $ G=H $, $ K=1 $.

We then assume that $ G $ is of even order, so $ 2\in\pi' $. For any prime divisor $ p $ of $ |G| $ with $ p>7 $ or $ p=5 $, by Lemma \ref{lem:p-normal}, $ G_p $ is normalized by $ G_3 $ and $ G_7 $, while by Proposition \ref{2p-gp},  $ G_p $ is normalized by $ G_2 $. Thus, $ G=G_{\{2,3,7\}'}{:}G_{\{2,3,7\}} $. Note that, $ HG_{\{2,3,7\}'}=G_{\pi\,\cup\,\{2,3,7\}'} $ is also a normal Hall subgroup of odd order, so by our assumption, $\{2,3,7\}'\subseteq\pi$. That is, $\pi'\subseteq\{2,3,7\}$, and one of the following holds\,:
\begin{itemize}
	\item[(i)] $\pi'=\{2\}$, $ K=G_2 $\,;\vs\vs
	\item[(ii)] $\pi'=\{2,3\}$ or $\{2,7\}$, where, respectively, $ K=G_{\{2,3\}}\ne G_{3}{:}G_2 $ or $ K=G_{\{2,7\}}\ne G_{7}{:}G_2 $, so $ K $ is given in Proposition \ref{2p-gp}\,;\vs\vs
	\item[(iii)] $\pi'=\{2,3,7\}$, $ K=G_{\{2,3,7\}}$ with neither $ G_3 $ nor $ G_7 $ normal in $ K $, so $ K $ is given in Lemma \ref{lem:G_{237}}.
\end{itemize}

Then, the proof is complete.\qed
\vs\vs

{\bf A proof of Proposition \ref{2p-gp}} is given in the remaining part of this section\,:

Let $K$ be a $\{2,p\}$-group with $p$ odd prime  satisfying {\rm Hypotheses~$\ref{hypo-0}$}. Let $ F=\Fit(K)=F_2\times F_p $. Assume that $ K_p $ is not normal in $ K $, so $K\ne K_p{:}K_2$, and $ F_p<K_p $.\vs

We first prove $ p\in\{3,7\} $ and determine the group $ \ov K:=K/F_p $.

The group $ \ov K $ is also a $ \{2,p\} $-group as $ F_p<K_p $, and meanwhile, $ \ov K $ has no normal $ p $-subgroups, so $\Fit(\ov K)=\O_2(\ov K)$, and \[H:=\ov K/\Fit(\ov K)\lesssim\Out(\Fit(\ov K))=\Out(\O_2(\ov K)).\] Now, $ |\ov K| $ is divisible by $ p $, so are $ |H| $ and $ |\Out(\O_2(\ov K))| $. Thus, by Proposition~\ref{2-gp}, the triple $ (\O_2(\ov K),\Aut(\O_2(\ov K)),\Out(\O_2(\ov K))) $ is one of the following~: \[\mbox{$(\ZZ_2^2,\S_3,\S_3),\, (\ZZ_2^3,\GL_3(2),\GL_3(2)),\, (\Q_8,\S_4,\S_3),\, (\Q_{8}\circ \ZZ_4,\S_4\times\ZZ_2,\S_3\times\ZZ_2) $,}\] where $ \Out(\O_2(\ov K))=\Aut(\O_2(\ov K))/\Inn(\O_2(\ov K))\lesssim\Aut(\O_2(\ov K)) $ can be regarded as a subgroup.

Note that, $ H $ is a subgroup of $ \Out(\O_2(\ov K)) $, and meanwhile, $H=\ov K/\O_2(\ov K) $, so it has no normal 2-subgroups. By checking the subgroups of $ \Out(\O_2(\ov K)) $, we conclude that, either $p=7$, $ H=\ZZ_7 $, $ \ov K=\ZZ_2^3{:}\ZZ_7 $; or $ p=3 $, $ H\in\{\ZZ_3,\S_3\} $, and one of the cases in Table \ref{tab-ovK} holds.

\begin{table}[h]
	\centering
	\caption{$ \ov K=K/F_3 $}\label{tab-ovK}
	\renewcommand{\arraystretch}{1.5}
	\scalebox{0.85}{
		\begin{tabular}{|c|c|c|c|c|}
			\hline
			$ \O_2(\ov K) $ & $\ZZ_2^2 $ & $\ZZ_2^3 $ & $ \Q_8 $ & $ \ZZ_4\circ \Q_8 $\\
			\hline
			$ \ov K=\O_2(\ov K){:}\ZZ_3 $ & $\A_4 $ & $ \ZZ_2\times \A_4 $ & $ \SL_2(3) $ & $ \ZZ_4\circ\SL_2(3) $ \\
			\hline
			$ \ov K=\O_2(\ov K){:}\S_3 $ & $\S_4 $ & $ \ZZ_2\times \S_4 $ & $ \GL_2(3) $ & $ \ZZ_4\circ\GL_2(3) $ \\
			\hline
	\end{tabular}}
\end{table}

We remark that, the groups in Table \ref{tab-ovK} satisfy Hypothesis \ref{hypo-0}. In the case $ H=\ZZ_3 $, this is natural as the Sylow 2-subgroup $\ov K_2=\O_2(\ov K) $ comes from Proposition~\ref{2-gp}. Meanwhile, in the case $ H=\S_3 $, we have $\ov K_2=\O_2(\ov K){:}\ZZ_2$, and respectively for $ \O_2(\ov K)=\ZZ_2^2 $, $ \ZZ_2^3 $, $ \Q_8 $ or $ \ZZ_4\circ \Q_8 $, it is verified by simple calculation that $ \ov K_2 $ has a index 2 subgroup isomorphic to $ \ZZ_4 $, $ \D_8 $, $ \ZZ_8 $ or $ \D_{16} $.\vs\vs

For convenience, we then consider $ K $ in three separate cases. Note that, $ F_2\lhd K $, $ F_2\cong\ov {F_2}\lhd\ov K $, so $ F_2\lesssim\O_2(\ov K) $. We thus have cases as follows\,:
\[\mbox{{\bf(I)} $ F_2=\O_2(\ov K) $, $ H\in\{\ZZ_7,\ZZ_3\} $\,;\ \ \ {\bf (II)} $ F_2=\O_2(\ov K) $, $ H=\S_3 $\,;\ \ \ {\bf (III)} $ F_2<\O_2(\ov K). $}\]

Note that, {\bf (I)} exactly corresponds to the case $ F_2=K_2\lhd K$. In this case, either $ p=7 $, $ K=\ZZ_2^3{:}K_7 $ is with $ K/F_7 \cong\ZZ_2^3{:}\ZZ_7 $; or $ p=3 $, $ K_2\in\{\ZZ_2^2, \ZZ_2^3,\Q_8, \ZZ_4\circ \Q_8\} $, and $ K=K_2{:}K_3 $ is with $ K/F_3 $ equal to one of the groups in Table \ref{tab-ovK}, so $ K $ is one of
\[\ZZ_2^2{:}K_3,\ \ZZ_2\times(\ZZ_2^2{:}K_3),\ \Q_8{:}K_3,\ \ZZ_4\circ_{\ZZ_2} (\Q_8{:}K_3),\] where $ \ZZ_4\circ_{\ZZ_2} (\Q_8{:}K_3) $ cannot be simply written as $ \ZZ_4\circ (\Q_8{:}K_3) $, as we might have $ \Z(\Q_8{:}K_3)>\ZZ_2 $.
These groups are given in {\bf Proposition \ref{2p-gp}~(1)}.\vs\vs

We then dealt with {\bf (II)} and {\bf (III)} in Lemmas \ref{II} and \ref{III-1}\,-\,\ref{III-3}, respectively.

\begin{lemma}\label{II}
	If it is in case {\bf(II)}, then $ K $ is one of the groups{\rm\,:} $ X $, $ \ZZ_2\times X $, $ Y $, $ \ZZ_4\circ_{\ZZ_2} Y $, where $ X,Y $ are given in {\rm Table \ref{tab-main}}, $ X $ is homomorphic to $\S_4$, and $ Y $ is homomorphic to $\GL_2(3)$.
\end{lemma}

\begin{proof}
In case {\bf(II)}, we have\,: \[\mbox{$ F_2=\O_2(\ov K)\in\{\ZZ_2^2,\, \ZZ_2^3,\, \Q_8,\, \ZZ_4\circ \Q_8\} $, $ H=\S_3 $, and $ K=F_3.\ov K=F_3.(F_2{:}\S_3) $.}\] Let $ L\le K $ be the full preimage of $\S_3\le\ov K$. Then, $ \l F_2,L\r=F_2L $ is the full preimage of $\ov K$, so $ F_2L=K $. Meanwhile, $ |F_2||L|=|F_2||F_3||\S_3|=|K| $, so $ F_2\cap L=1 $. Thus, $ K=F_2{:}L$ is a semidirect product, which satisfies\,: \[\mbox{$ \C_L(F_2)=F_3 $, and $ K/\C_L(F_2)=\ov K=F_2{:}\S_3 $ as given in Table \ref{tab-ovK},}\] so that, $ K $ is determined once $ F_2 $, $ L $ and $ F_3 $ are all determined.

Note that, $ L $ has a normal subgroup $ F_3 $ such that $ L/F_3\cong\S_3 $. It follows that the Sylow subgroup $ L_3 $ of $ L $ is normal, and $L=L_3{:}\ZZ_2$. Thus, $ L $ is one of the groups given in Corollary \ref{lem:3gp}~(1), by checking which we conclude that, there are six possibilities for the pair $  (L,\,F_3) $ as follows, where $ \ell\ge1 $\,:
\[\mbox{$ (\D_{2.3^{\ell}},\,\ZZ_{3^{\ell-1}})$,\ \ $(\ZZ_{3^{\ell}}\times\D_{6},\, \ZZ_{3^{\ell}}),\ \ (\D_{2.3^{\ell}}\times\ZZ_3,\,\ZZ_{3^{\ell-1}}\times\ZZ_{3})$,}\]
\[\mbox{$((\ZZ_{3^{\ell}}\times\ZZ_3){:}\ZZ_2,\,\ZZ_{3^{\ell}}),\ \ ((\ZZ_{3^{\ell}}\times\ZZ_3){:}\ZZ_2,\,\ZZ_{3^{\ell-1}}\times\ZZ_{3})$ \,and\, $(\ZZ_{3^{\ell+1}}{:}\ZZ_6,\,\ZZ_{3^{\ell}}\times\ZZ_3)$.}\]

(1) Suppose $ F_2=\ZZ_2^2 $, so $ K=\ZZ_2^2{:}L $ satisfies that $ K/F_3=\ZZ_2^2{:}\S_3\cong\S_4 $. Then, respectively for the six possibilities, we obtain the following groups, which are collected in Table \ref{tab-main} as $ X $\,:
\[\mbox{$\ZZ_2^2{:}\D_{2.3^{\ell}}$, \ \ $\ZZ_{3^{\ell}}
\times \S_4,\ \ (\ZZ_2^2{:}\D_{2.3^{\ell}})\times\ZZ_3,$}\]
\[\mbox{$(\ZZ_{3^{\ell}}\times \A_4){:}\ZZ_2,\ \ ((\ZZ_2^2{:}\ZZ_{3^{\ell}})\times\ZZ_{3}){:}\ZZ_2$ \,and\, $\ZZ_2^2 {:}(\ZZ_{3^{\ell+1}}{:}\ZZ_6)$.}\]

(2) Suppose $ F_2=\ZZ_2^3 $, so $ K=\ZZ_2^3{:}L$ satisfies that $ K/F_3= \ZZ_2\times(\ZZ_2^2{:}\S_3) $. That is, $ L $ centralizes some $ \ZZ_2 $, and normalizes its complement $ \ZZ_2^2 $. Thus,  $K=\ZZ_2\times(\ZZ_2^2{:}L) $, where $ \ZZ_2^2{:}L=X $ is given above.

(3) Suppose $ F_2=\Q_8 $, so $ K=\Q_8{:}L $ satisfies that $ K/F_3=\Q_8{:}\S_3\cong\GL_2(3) $. Then, similar to (1), we obtain the following groups , which are collected in Table \ref{tab-main} as $ Y $\,:
\[\mbox{$\Q_8{:}\D_{2.3^{\ell}}$,\ \  $\ZZ_{3^{\ell}}\times \GL_2(3)$, \ \ $ (\Q_8{:}\D_{2.3^{\ell}})\times\ZZ_3$,}\]
\[\mbox{$(\ZZ_{3^{\ell}}\times (\Q_8{:}\ZZ_3)){:}\ZZ_2$,\ \
$((\Q_8{:}\ZZ_{3^{\ell}})\times\ZZ_{3}){:}\ZZ_2$ \,and\, $\Q_8 {:}(\ZZ_{3^{\ell+1}}{:}\ZZ_6)$}.\]

(4) Suppose $ F_2=\ZZ_4\circ \Q_8$, so $ K=(\ZZ_4\circ\Q_8){:}L $ satisfies that $ K/F_3=\ZZ_4\circ(\Q_8{:}\S_3) $. That is, $ L $ centralizes $ \ZZ_4 $, and normalizes $ \Q_8 $. Thus, $ \ZZ_4 $ and $ \Q_8{:}L=Y $ as given above are two normal subgroups of $ K $ commuting with each other. Further, $ \ZZ_4Y=K $, so $ |\ZZ_4\cap Y|=2 $, and $ K=\ZZ_4\circ_{\ZZ_2} Y $. Note that, $ \ZZ_4\circ_{\ZZ_2} Y $ cannot be simply written as $ \ZZ_4\circ Y $,  as we might have $ \Z(Y)>\ZZ_2 $.
\end{proof}

\begin{lemma}\label{III-1}
	If it is in case {\bf(III)}, then $ p=3 $, and $ (F_2,\O_2(\ov K))=(\ZZ_2^2,\ZZ_2^3)$ or $ (\Q_8,\ZZ_4 \circ \Q_8) $.
\end{lemma}

\begin{proof}
	
	In case {\bf (III)}, we have $ F_2\lhd\ov K=\O_2(\ov K){:}H $ and $ F_2<\O_2(\ov K) $.
	
	If $ p=7 $, then $ \ov K=\ZZ_2^3{:}\ZZ_7 $ has $ \O_2(\ov K)=\ZZ_2^3 $ as a minimal normal subgroup, and so $ F_2=1 $. That is, $ F=F_7 $, and $$ \ZZ_2^3{:}\ZZ_7=\ov K=K/F_7\lesssim\Out(F_7).$$ It is impossible, as by Lemma \ref{p-gp}, $ |\Aut(F_7)|$ being divisible by 8 only happens when $ F_7=\ZZ_7^2 $, $\Out(F_7)=\GL_2(7) $. However, $ \GL_2(7) $ has no subgroup isomorphic to $ \ZZ_2^3 $.
	
	Thus, $ p=3 $, $ H\in\{\ZZ_3,\S_3\} $, and $ \ov K=\O_2(\ov K){:}H $ is one of the groups in Table \ref{tab-ovK}\,:
	\[\ZZ_2^2{:}H,\ \ZZ_2\times(\ZZ_2^2{:}H),\ \Q_8{:}H,\ \ZZ_4\circ(\Q_8{:}H).\] Note that, $ \ZZ_2^2{:}H $ has $ \ZZ_2^2 $ as a minimal normal subgroup. Thus,
	\begin{itemize}
		\item[-] if $\ov K=\ZZ_2^2{:}H $, then $ F_2=1 $\,;
		\item[-] if $ \ov K=\ZZ_2\times(\ZZ_2^2{:}H)=\l a\r\times(\l b,c\r{:}H) $, then $ F_2$ is one of $1$, $\l a\r=\ZZ_2$, $\l b,c\r=\ZZ_2^2$.
	\end{itemize}
    Also note that, $ \Q_8{:}H $ only has the center $ \ZZ_2 $ and $ \Q_8 $ as normal $ 2 $-subgroups. Thus,
    \begin{itemize}
    	\item[-] if $ \ov K=\Q_8{:}H $, then $ F_2=1 $ or $ \ZZ_2 $\,;
    	\item[-] if $ \ov K=\ZZ_4\circ(\Q_8{:}H) $, then either $ F_2\le\Q_8$, so $ F_2=1, \ZZ_2, \Q_8 $\,; or $ F_2\Q_8=\ZZ_4\circ\Q_8 $, implying that $  F_2=\ZZ_4 $.
    \end{itemize}
	
\noindent In conclusion, $ (F_2,\O_2(\ov K)) $ with $ F_2<\O_2(\ov K) $ is one of the pairs listed as follows\,:
	\begin{itemize}
		\item[(a)]
		$(1,\ZZ_2^2)$, $(1,\ZZ_2^3)$, $(\ZZ_2,\ZZ_2^3)$,  $(\ZZ_2,\Q_8)$, $(\ZZ_2,\ZZ_4\circ\Q_8)$, $(\ZZ_4,\ZZ_4\circ\Q_8)$\,;\vs
		\item[(b)] $(1,\Q_8)$, $ (1,\ZZ_4\circ \Q_8) $\,;\vs
		\item[(c)] $(\ZZ_2^2,\ZZ_2^3)$, $ (\Q_8,\ZZ_4 \circ \Q_8) $.
	\end{itemize}

We thus need to exclude the pairs in (a) and (b). For the pairs in (a), we have\,:\[\mbox{$\ov K/F_2=(\O_2(\ov K){:}H)/F_2=\ZZ_2^2{:}H $ or $ \ZZ_2\times(\ZZ_2^2{:}H), $}\] which contains a subgroup $ \ZZ_2^2{:}\ZZ_3\cong\A_4 $. Meanwhile, $ F=F_3\times F_2 $, so $$ K/F\cong\ov K/F_2 \lesssim\Out(F)=\Out(F_3)\times\Out(F_2),$$ where $ \Out(F_2)=1 $ or $\ZZ_2$. Thus, $ \A_4 $ is embedded into either $\Out(F_3)$ or $ \Out(F_3)\times\ZZ_2 $. For the latter, as $ \A_4\cap\ZZ_2\lhd\A_4 $, $ \A_4\cap\ZZ_2=1 $, so $ \A_4 $ is also embedded into $ \Out(F_3) $. We thus get a contradiction by Corollary \ref{lem:3gp}~(3). Hence this case cannot occur.\vs

(b.1) Suppose $ (F_2,\O_2(\ov K))=(1,\Q_8)$. Then, $F=F_3$, $K/F_3=\Q_8{:}H\lesssim\Out(F_3)$. By lemma \ref{p-gp}, $ |\Out(F_3)| $ being divisible by $ 8 $ only happens when $$ F_3=\ZZ_3^2,\ \ \Out(F_3)=\GL_2(3)\cong\Q_8{:}\S_3. $$ Since $ H=\ZZ_3$ or $ \S_3 $, $ K_3>F_3 $, and in particular, $ K_3 $ acts on $ F_3 $ non-trivially, so $ K_3$ is non-abelian as given in Lemma \ref{p-gp}~(3), written explicitly as \[\mbox{$ K_3=\l a\r{:}\l b\r=\ZZ_9{:}\ZZ_3 $, with $a^b=a^4$, $b^a=a^6b$.}\] By simple calculation, any element lying in $ K_3\setminus\l a^3,b\r $ has order $ 9 $, so $ F_3=\l a^3\r\times\l b\r $.
	
Let $ c $ be the central involution of $ K_2=(\Q_8{:}H)_2=\Q_8$ or $\Q_8{:}\ZZ_2 $. Then $ \ov c $ is the central involution of $ \ov K\lesssim\GL_2(3) $, so $ \ov c $ acts on $ F_3 $ by mapping each $ x $ to $ x^{-1} $. That is, for any for any $ x\in F_3 $, we have
\begin{center}
	$ x^{\ov c}=x^c=x^{-1} $, so $ c^x=cx^2 $.
\end{center}
Meanwhile, $ \ov c\in\Z(\ov K) $, $ \ov c^{\ov a}=\ov c $, and so $ c^a=cy $ with some $ y=a^{3i}b^j\in F_3 $. Thus, for $ a^3\in F_3 $, we have \[c^{a^3}=(ca^{3i}b^j)^{a^2}=(ca^{(6i+6j)}b^{2j})^a=ca^{(9i+18j)}b^{3j}=c\ne ca^6,\] a contradiction. Hence this case cannot occur.\vs
	
(b.2) Suppose $(F_2, \O_2(\ov K))=(1,\ZZ_4\circ \Q_8)$. Then, $ F=F_3 $, $\ov K=\ZZ_4\circ(\Q_8{:}H) \lesssim\Out(F_3)$. Similar to (b.1), $ |\Out(F_3)| $ being divisible by $ 8 $ implies $ F_3=\ZZ_3^2 $, $ \Out(F_3)=\GL_2(3)$. By comparing the orders, we have $ H=\ZZ_3 $ and $ \ZZ_4\circ(\Q_8{:}\ZZ_3)=\GL_2(3) $, which is impossible as $ \Z(\GL_2(3))=\ZZ_2\ne\ZZ_4 $. Hence this case cannot occur.
\end{proof}

\begin{lemma}\label{III-2}
	If $ (F_2,\O_2(\ov K))=(\ZZ_2^2,\ZZ_2^3)$, then $ K = W $ is given in {\rm Table \ref{tab-main}} (the left side), homomorphic to $\A_4$ or $\S_4$.
\end{lemma}

\begin{proof}
The proof is similar to Lemma \ref{II}. In this case, we have\,: \[K=F_3.\ov K=F_3.(\ZZ_2\times(\ZZ_2^2{:}H))=F_3.(\ZZ_2^2{:}(\ZZ_2\times H)), \mbox{\ where\ } F_2=\ZZ_2^2.\]
Let $ L\le K $ the full preimage of $\ZZ_2\times H\le \ov K$. Then, $ \l F_2,L\r=F_2L $ is the full preimage of $\ov K$, so $ F_2L=K $. Meanwhile, $ |F_2||L|=|F_2||F_3|\cdot 2|H|=|K| $, so $ F_2\cap L=1 $. Thus, $ K=F_2{:}L=\ZZ_2^2{:}L$ is a semidirect product, which satisfies\,: \[\mbox{$ \C_L(\ZZ_2^2)=F_3{:}\ZZ_2 $, and $ K/\C_L(\ZZ_2^2)\cong\ov K/\ZZ_2=\ZZ_2^2{:}H\cong \A_4$ or $ \S_4 $,}\] so that, $ K $ is determined once the pair $(L,F_3) $ is determined.

(1) Suppose $ H=\ZZ_3 $. Then, $ L $ has a normal subgroup $ F_3 $ such that $ L/F_3=\ZZ_6 $. It follows that the Sylow subgroup $ L_3 $ of $ L $ is normal, and $L=L_3{:}\ZZ_2$. Further, $ L\ne L_3\times\ZZ_2 $; otherwise we can get a normal 2-subgroup of $ K $ larger than $ F_2 $. Thus, $ L $ is one of the groups given in Corollary \ref{lem:3gp}~(1), by checking which we conclude that, there are three possibilities for the pair $ (L,F_3) $ as follows, where $ \ell\ge 1 $\,: \[\mbox{$(\D_{2.3^{\ell}}\times\ZZ_3,\,\ZZ_{3^{\ell}})$, \  $(\ZZ_{3^{\ell}}\times\D_{6},\,\ZZ_{3^{\ell-1}}\times\ZZ_3 )$, \  $(\ZZ_{3^{\ell+1}}{:}\ZZ_6,\,\ZZ_{3^{\ell+1}})$,}\] which, respectively, provide groups $ K=\ZZ_2^2{:}L $ with $ K/({F_3{:}\ZZ_2})\cong\ZZ_2^2{:}\ZZ_3= \A_4 $ as follows\,: \[\mbox{$\D_{2.3^{\ell}}\times \A_4$, \ $  (\ZZ_2^2{:}{\ZZ_{3^{\ell}}})\times \D_6$, \ $(\ZZ_2^2\times\D_{2.3^{\ell+1}}){:}\ZZ_3$.}\]

(2) Suppose $ H=\S_3 $. Then, $ L $ has a normal subgroup $ F_3 $ such that $ L/F_3=\ZZ_2\times\S_3 $, and so $ L=L_3{:}(\ZZ_2\times\ZZ_2)$. Further, $ \C_{\ZZ_2\times\ZZ_2}(L_3) =1$; otherwise we can get a normal 2-subgroup of $ K $ larger than $ F_2 $. Thus, by Corollary \ref{lem:3gp}~(2), $L=\D_{2.3^{\ell}}\times\D_6 $ with $ \ell\ge 1 $, and $F_3=\ZZ_{3^{\ell}}$ or $ \ZZ_{3^{\ell-1}}\times\ZZ_3$. This provides groups $ K=\ZZ_2^2{:}L $ with $ K/({F_3{:}\ZZ_2})\cong\ZZ_2^2{:}\S_3= \S_4 $ as follows\,: \[\mbox{$\D_{2.3^{\ell}}\times\S_4$\, or\, $ (\ZZ_2^2{:}\D_{2.3^{\ell}})\times\D_6$.}\]
\end{proof}

\begin{lemma}\label{III-3}
	If $ (F_2,\O_2(\ov K))=(\Q_8,\ZZ_4 \circ \Q_8)$, then $ K = W $ is given in {\rm Table \ref{tab-main}} (the right side), homomorphic to $\ZZ_4\circ\SL_2(3)$ or $\ZZ_4\circ\GL_2(3)$.
\end{lemma}

\begin{proof}

In this case, we have\,: \[K=F_3.((\ZZ_4\circ\Q_8){:}H)= F_3.(\ZZ_4\circ(\Q_8{:}H)),\] so $ K $ is homomorphic to $\ZZ_4\circ\SL_2(3)$ or $\ZZ_4\circ\GL_2(3)$, respectively for $ H=\ZZ_3 $ or $ \S_3 $.

Let $ L\leqslant K $ be the full preimage of $ \ZZ_4\times H\leqslant\ov K$. Then, $ \l F_2,L\r =F_2L$ is the full preimage of $ \ov K $, so $K=F_2L $.

(1) Suppose $ H=\ZZ_3 $. Then, $ L $ has a normal subgroup $ F_3 $ with $ L/F_3=\ZZ_{12} $. It follows that the Sylow subgroup $ L_3 $ is normal, and $ L=L_3{:}\ZZ_4$. Let $ x\in\C_{\ZZ_4}(L_3)$, so $ x\in\Z(L) $, and $ F_2\l x\r $ is a normal 2-subgroup of $ K $. It implies $ |x|<4 $, and so $ \C_{\ZZ_4}(L_3)= \ZZ_4\cap \Q_8=\ZZ_2$. That is, $ L/\C_{\ZZ_4}(L_3)=L{:}\ZZ_2 $ is one of the groups given in Corollary \ref{lem:3gp}~(1), by checking which we conclude that, there are three possibilities for the pair $ (L,F_3) $ as follows, where $ \ell\ge1 $.

(1.1) $(L,\,F_3)=((\ZZ_{3^{\ell}}{:}\ZZ_4)\times\ZZ_3,\,\ZZ_{3^{\ell}})$. Let $ M=\ZZ_{3^{\ell}}{:}\ZZ_4 $, $ N=\l F_2,\ZZ_3\r=\Q_8{:}\ZZ_3 $. Then, $ M $, $ N $ are two normal subgroups of $ K $, which generate $ K $ and commutes with each other. In particular, $ |M\cap N|=2$, so $M\cap N=\ZZ_2=\ZZ_4\cap\Q_8=\ZZ(N) $, and \[ K=M\circ N=(\ZZ_{3^{\ell}}{:}\ZZ_4)\circ(\Q_8{:}\ZZ_3).\]

(1.2) $ (L,\,F_3)=(\ZZ_{3^{\ell}}\times(\ZZ_{3}{:}\ZZ_4),\,\ZZ_{3^{\ell-1}}\times\ZZ_3)$. Similar to (1.1),  we have\,: \[K= (\Q_8{:}\ZZ_{3^\ell})\circ(\ZZ_{3}{:}\ZZ_4).\]

(1.3) $ (L,F_3)=(\ZZ_{3^{\ell+1}}{:}\ZZ_{12},\ZZ_{3^{\ell+1}})$. Let $ M=\ZZ_{3^{\ell+1}}{:}\ZZ_4 $. Then, $ F_2=\Q_8 $ and $ M $ are two normal subgroups of $ K $ commuting with each other. Moreover, $ \Q_8M $ is normal in $ K $, with a complementary subgroup $ \ZZ_3\le\ZZ_{12} $, so $ K=(\Q_8M){:}\ZZ_3 $. In particular, $ | \Q_8\cap M|=2 $, $ \Q_8\cap M=\ZZ_2=\Q_8\cap\ZZ_4=\Z(\Q_8) $, so $ \Q_8M=\Q_8\circ M $, and \[K=(\Q_8\circ (\ZZ_{3^{\ell+1}}{:}\ZZ_4)){:}\ZZ_3.\]

(2) Suppose $ H=\S_3 $. Then, similar to (1), it is shown that, $ L=L_3{:}(\ZZ_4\times\ZZ_2)$ with $ \C_{\ZZ_4\times\ZZ_2}(L_3)= \ZZ_4\cap F_2=\ZZ_2$, so by Corollary \ref{lem:3gp}~(2), $ L/\C_{\ZZ_4\times\ZZ_2}(L_3)=L{:}(\ZZ_2\times\ZZ_2)=\D_{2.3^{\ell}}\times\D_6,$ where $ \ell\ge1 $. Note that, $ L/F_3=\ZZ_4\times\S_3 $. We thus have two possibilities\,: \[ (L,\,F_3)=((\ZZ_{3^{\ell}}{:}\ZZ_4)\times\D_6,\,\ZZ_{3^{\ell}}) \mbox{\ or\ } (\D_{2.3^{\ell}}\times\D_6,\,\ZZ_{3^{\ell-1}}\times\ZZ_3).\] This, similar to (1.1), (1.2), provides groups $ K $ as follows\,: \[\mbox{$ (\ZZ_{3^{\ell}}{:}\ZZ_4)\circ(\Q_8{:}\S_3)$ \,or\, $ (\Q_8{:}\D_{2.3^{\ell}})\circ(\ZZ_3{:}\ZZ_4)$.}\]
\end{proof}

\section{Proof of Theorem~\ref{thm:arctransmap-gps}}\label{sec:thm-2}

In this section, we focus on the arc-transitive automorphism groups of maps. As the following lemma shows, the class of such groups is nearly closed under taking quotients.

\begin{lemma}\label{lem:arctrans}
Let $G$ be a finite group, with $\ov G$ a non-identity factor group. Then:
\begin{itemize}
	\item[\rm(1)] If $ G $ has a regular triple (resp. reversing triple), then either $ \ov G $ has a regular
	triple (resp. reversing triple), or $ \ov G $ is $ \ZZ_2 $ or a dihedral group. In particular, $ \ov G $ is of even order.\vs
	\item[\rm(2)] If $ G $ has a rotary pair, then either $ \ov G $ has a rotary pair, or $ \ov G $ is cyclic. In particular, if $ \ov G $ is of odd order, then it is cyclic.
\end{itemize}
\end{lemma}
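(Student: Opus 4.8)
The plan is to handle the two cases (regular/reversing triples, and rotary pairs) in parallel, since both reduce to the same elementary observation: the defining data of an arc-transitive map is a generating set of involutions (plus in the rotary case one element $\a$), and generating sets push forward under quotient maps. Let $\varphi\colon G\to\ov G$ be the quotient epimorphism. For part (1), suppose $G=\l x,y,z\r$ with $|x|=|y|=|z|=2$ (and, in the regular case, $[x,z]=1$). The images $\ov x=x^\varphi$, $\ov y=y^\varphi$, $\ov z=z^\varphi$ generate $\ov G$ and each has order $1$ or $2$; in the regular case $\ov x$ and $\ov z$ still commute. The only obstruction to $(\ov x,\ov y,\ov z)$ being a genuine reversing (resp.\ regular) triple is that one or more of the images could be trivial. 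I would enumerate these degenerate subcases: if all three images are trivial then $\ov G=1$, excluded; if exactly two are trivial, say $\ov y=\ov z=1$, then $\ov G=\l\ov x\r$ has order $\le 2$; if exactly one is trivial, say $\ov z=1$, then $\ov G=\l\ov x,\ov y\r$ is generated by (at most) two involutions, hence is dihedral (including the degenerate dihedral groups $1$, $\ZZ_2$, $\ZZ_2^2$); in the regular case one should note that when $\ov y=1$ the relation $[\ov x,\ov z]=1$ forces $\ov G=\l\ov x,\ov z\r\cong\ZZ_2^2$ or smaller, still dihedral. If none of the images is trivial, then $(\ov x,\ov y,\ov z)$ is by definition a reversing triple for $\ov G$, and if moreover $[x,z]=1$ we retain $[\ov x,\ov z]=1$, so it is a regular triple. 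In every surviving case $\ov G$ is either $\ZZ_2$, dihedral, or carries the appropriate triple, and in all of these $\ov G$ has even order (a group generated by involutions and not trivial has even order); this proves (1).

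For part (2), suppose $(\a,z)$ is a rotary pair for $G$, so $G=\l\a,z\r$ with $|z|=2$. Then $\ov G=\l\ov\a,\ov z\r$ with $\ov z$ of order $1$ or $2$. If $\ov z\ne 1$, then $(\ov\a,\ov z)$ is a rotary pair for $\ov G$ by definition. If $\ov z=1$, then $\ov G=\l\ov\a\r$ is cyclic. This gives the dichotomy; for the "in particular" clause, observe that if $\ov G$ has odd order then it cannot have a rotary pair (a rotary pair supplies the involution $\ov z$, forcing even order unless $\ov z=1$, i.e.\ unless $\ov G$ is the cyclic quotient), so $\ov G$ must be cyclic.

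I do not anticipate a serious obstacle here: the argument is a routine case analysis on which generators die in the quotient, and the only point requiring a little care is bookkeeping the small degenerate dihedral groups ($1$, $\ZZ_2$, $\ZZ_2^2$) so that the statement "$\ov G$ is $\ZZ_2$ or a dihedral group" is literally correct — in particular making sure the regular-case commuting condition $[\ov x,\ov z]=1$ is either preserved or forces $\ov G$ into the allowed list. The cleanest way to phrase this is to adopt the convention that dihedral groups include $\ZZ_2\times\ZZ_2=\D_4$ and the trivial and order-$2$ groups as degenerate cases, and then the case "exactly one image trivial" closes immediately.
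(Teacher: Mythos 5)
Your approach is the same as the paper's (push the three generators to the quotient and do a case analysis on which images degenerate), and parts (2) and the reversing-triple half of (1) are fine. However, in the regular-triple case you have dropped one degenerate possibility. A regular triple requires $\l x,z\r\cong\ZZ_2^2$, i.e.\ $x$ and $z$ are \emph{distinct} commuting involutions, not merely $[x,z]=1$. Your step ``if none of the images is trivial, then \dots if moreover $[x,z]=1$ we retain $[\ov x,\ov z]=1$, so it is a regular triple'' is false when $\ov x=\ov z\ne 1$: there $\l\ov x,\ov z\r\cong\ZZ_2$, so $(\ov x,\ov y,\ov z)$ is not a regular triple. The lemma's conclusion still holds in that case, because $\ov G=\l\ov x,\ov y\r$ is generated by at most two involutions and hence is $\ZZ_2$ or dihedral --- this is exactly the extra subcase the paper's proof lists explicitly (``or $\ov x=\ov z$''). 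Adding that one line closes the gap; everything else in your write-up, including the bookkeeping of degenerate dihedral groups and the even-order remark, is correct.
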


\begin{proof}
Let $(x,y,z)$ be a regular triple for $ G $. Then, $G=\l x,y,z\r$ and $ \l x,z\r=\ZZ_2^2 $, so $\ov G=\l\ov x,\ov y,\ov z\r $, $\ov x\,\ov z=\ov z\,\ov x$. Now, if $\ov x,\ov y,\ov z$ are involutions, and $ \ov x\ne\ov z $, then $ (\ov x,\ov y,\ov z) $ is a regular triple for $ \ov G $. Otherwise, either some of $\ov x,\ov y,\ov z$ equals the identity-element $ \ov 1 $, or $ \ov x=\ov z $, where, in both cases, $\ov G$ is generated by at most two involutions, so it is $ \ZZ_2 $ or dihedral. In particular, $ \ov G $ is always of even order.

It is similar to prove the remaining part of this lemma.
\end{proof}

\begin{lemma}\label{not-Q:S3}
Let $G=\GL(2,3)$. Then neither $G$ nor $ \ZZ_4\circ G $ has a regular triple.
\end{lemma}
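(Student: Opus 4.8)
The plan is to show that $G = \Q_8{:}\S_3 \cong \GL(2,3)$ has no regular triple, and then to deduce the same for $\ZZ_4 \circ G$ by a quotient argument using Lemma~\ref{lem:arctrans}. Recall that a regular triple $(x,y,z)$ for $G$ requires $G = \l x,y,z\r$ with $|x|=|y|=|z|=2$ and $\l x,z\r \cong \ZZ_2^2$; in particular $x,z$ are commuting involutions with $xz$ also an involution, so $\l x,z\r$ is a Klein four-group contained in $G$.

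First I would examine the involutions of $G = \GL(2,3)$ and the Klein four-subgroups. The group $\GL(2,3)$ has order $48$, with center $\Z(G) = \ZZ_2$ generated by $-I$, and $\GL(2,3)/\Z(G) \cong \S_4$. The involutions of $G$ are $-I$ together with the elements of order $2$ mapping to transpositions or double transpositions in $\S_4$ (one must check which lifts actually have order $2$ rather than $4$, using that $\Q_8 \lhd G$ contains only the involution $-I$). The key structural point is to pin down the Klein four-subgroups $V = \l x,z\r$: since $\Q_8$ has a unique involution, $V \cap \Q_8 = \l -I\r$ or $V\cap\Q_8=1$; a short argument shows $V$ must contain $-I$ (an involution outside $\Q_8$ together with its product with a non-central involution tends to land back on $-I$), so every Klein four-subgroup of $G$ contains the central involution $-I$.

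With that in hand, the crucial observation is: if $-I \in \l x,z\r \le \l x,y,z\r = G$, then in fact $-I$ lies in $\l x,z\r$, and one checks whether $\l x,y,z\r$ can still be all of $G$. The cleaner route is to pass to the quotient $\bar G = G/\l -I\r \cong \S_4$. The images $\bar x, \bar y, \bar z$ generate $\S_4$; but $\bar x = \bar z$ (since $x z^{-1} = xz \in \l -I\r$ forces $\bar x = \bar z$ once we know $-I \in \l x,z\r$, using $|xz|\le 2$ and $xz \ne 1$ gives $xz = -I$). So $\bar G = \l \bar x, \bar y\r$ is generated by two involutions, hence is dihedral — but $\S_4$ is not dihedral, a contradiction. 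Therefore $G$ has no regular triple. I expect the main obstacle to be the careful verification that every Klein four-subgroup of $\GL(2,3)$ contains $-I$ (equivalently, that any two commuting involutions of $G$ whose product is an involution must have that product equal to $-I$); this requires knowing the conjugacy classes of involutions and how commuting involutions sit inside $G$, which is a finite but slightly delicate case check.

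Finally, for $\widetilde G = \ZZ_4 \circ G$: this has $\widetilde G / \Z(\widetilde G) \cong \S_4$ as well, or more directly $\widetilde G$ has a central subgroup $Z \cong \ZZ_2$ (the common involution identified in the central product) with $\widetilde G / Z \cong \ZZ_2 \times \S_4$, which contains $G/\l -I\r \cong \S_4$ appropriately; in any case $\widetilde G$ surjects onto $G \cong \GL(2,3)$ — indeed $G$ is a quotient of $\widetilde G$ since $\widetilde G = (\ZZ_4 \times G)/\l(c^2, -I)\r$ maps onto $G$ by killing the $\ZZ_4$ factor. By Lemma~\ref{lem:arctrans}(1), if $\widetilde G$ had a regular triple then $G$ would have a regular triple or be $\ZZ_2$ or dihedral; since $G \cong \GL(2,3)$ is neither dihedral nor $\ZZ_2$ and has no regular triple by the first part, $\widetilde G$ has no regular triple either. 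This completes the proof.
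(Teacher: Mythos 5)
Your treatment of $G=\GL(2,3)$ itself is essentially the paper's argument, though one step is left vague: the reason every Klein four-subgroup $\l x,z\r$ contains $-1$ is simply that $G/\Q_8\cong\S_3$ has no subgroup $\ZZ_2^2$, so $\l x,z\r\cap\Q_8\ne1$, and the unique involution of $\Q_8$ is $-1$; no delicate case check on conjugacy classes is needed. Also, you only treat the subcase $xz=-1$; if instead $x=-1$ or $z=-1$ then $\ov G=\l\ov y,\ov z\r$ or $\l\ov x,\ov y\r$, which is still dihedral, so the conclusion survives in all three cases. With these repairs the first half is fine and coincides with the paper's proof.

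The second half is where the proposal genuinely breaks down. Your reduction rests on the claim that $\ZZ_4\circ G$ surjects onto $G\cong\GL(2,3)$ by ``killing the $\ZZ_4$ factor.'' This is false: in $(\ZZ_4\times G)/\l(c^2,-I)\r$ the image of $\ZZ_4\times1$ already contains the image of $-I$ (since $c^2\equiv-I$), so collapsing it yields $G/\l-I\r\cong\S_4$, not $G$. More decisively, $\ZZ_4\circ G$ has cyclic center $\ZZ_4$, hence a unique normal subgroup of order $2$, namely $\l-1\r$, and $(\ZZ_4\circ G)/\l-1\r\cong\ZZ_2\times\S_4$; so $\GL(2,3)$ is not a quotient of $\ZZ_4\circ G$ at all. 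Worse, no application of Lemma~\ref{lem:arctrans} can close this case, because the relevant proper quotients $\S_4$ and $\ZZ_2\times\S_4$ \emph{do} admit regular triples (both occur as groups $K$ in Theorem~\ref{thm:arctransmap-gps}(1)). This is exactly why the paper handles $\ZZ_4\circ(\Q_8{:}\S_3)$ by a direct computation: it enumerates all involutions, proves that the only involutions commuting with the non-central involutions $\pm u\lambda$ of the normal subgroup $H=\ZZ_4\circ\Q_8$ are $\pm u\lambda$ and $-1$, uses $K/H\cong\S_3$ to force $\l x,z\r\cap H>1$ and hence $-1\in\{x,z,xz\}$, and only then derives the contradiction that $K/\l-1\r\cong\ZZ_2\times(\ZZ_2^2{:}\S_3)$ would be dihedral. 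Some substitute for this involution analysis is required; the quotient shortcut you propose is not available.
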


\begin{proof}
Suppose that $G\cong\Q_8{:}\S_3$ has a regular triple $(x,y,z)$.
Then, $ \l x,z\r=\ZZ_2^2 $, so $\{x,z,xz\}\cap\Q_8\not=\emptyset$, namely, one of $x,z,xz$ is the central involution $ -1 $ of $G$. It follows that $\ov G=G/\l-1\r=\l \ov x,\ov y\r$ or $\l\ov y,\ov z\r$ is dihedral, and this contradicts $\ov G\cong\ZZ_2^2{:}\S_3=\S_4$.

Suppose that $K=\ZZ_4\circ G\cong\ZZ_4\circ(\Q_8{:}\S_3)$ has a regular triple $(x,y,z)$.
Write $ K $ as \[ K=\l u\r\circ(\Q_8{:}(\l\s\r{:}\l\tau\r)),\] where $ |u|=4 $, $ \Q_8=\pm\{1,i,j,k\} $ with $ i^2=j^2=k^2=ijk=-1 $, and $ \s,\tau\in \Aut(\Q_8) $ act on $ \Q_8 $ as follows\,: \[ \s: i\rightarrow j\rightarrow k\, (\rightarrow i)\mbox{\,\ and\,\ }\tau: i\leftrightarrow -j, j\leftrightarrow -i, k\leftrightarrow -k .\]
Then, $ \l\s\r{:}\l\tau\r=\S_3 $ contains three involutions, i.e., $ \tau_k:=\tau $, and $ \tau_i $, $ \tau_j $ such that \[\tau_i: j\leftrightarrow -k, k\leftrightarrow -j, i\leftrightarrow -i \mbox{\,\ and\,\ } \tau_j: i\leftrightarrow -k, k\leftrightarrow -i, j\leftrightarrow -j.\]
By direct calculation, we conclude that the involutions contained in $ K $ are as follows\,: \[\{-1\}\cup\{ \pm u\lambda \}\cup \{\pm \tau_{\lambda}, \pm \lambda\tau_{\lambda}\}, \mbox{\ where\ } \lambda\in\{i,j,k\}.\]

\noindent {\bf Claim.} The only involutions commutes with $ u\lambda $ (or $-u\lambda $) are $ \pm u\lambda$ and $ -1 $.

In fact, for any $ \lambda'\in\{i,j,k\} $ with $ \lambda\ne\lambda' $, we have\,:
\begin{center}
	$ \lambda^{\lambda'}= \lambda^{\tau_\lambda}=\lambda^{\lambda\tau_\lambda}=-\lambda $ \ and\
	$ \lambda^{\tau_{\lambda'}}=-\lambda'',\ \lambda^{\lambda'\tau_{\lambda'}}=-\lambda^{\tau_{\lambda'}}=\lambda'' $,
\end{center}
where $\lambda''\in\{i,j,k\}\setminus\{\lambda,\lambda'\}.$
As $ u $ and $ -1 $ lie in the center of $ K $, we then have\,:
\[(\pm u\lambda)^{\pm u\lambda'}=\pm u(\lambda^{\pm u\lambda'})=\pm u(\lambda^{\lambda'})=\mp u\lambda,\]
 \[(\pm u\lambda)^{\pm \tau_\lambda}=\pm u (\lambda^{\pm \tau_\lambda})=\mp u\lambda,\ (\pm u\lambda)^{\pm \lambda\tau_\lambda}=\pm u (\lambda^{\pm \lambda\tau_\lambda})=\mp u\lambda,\]
 \[(\pm u\lambda)^{\pm \tau_{\lambda'}}=\pm u(\lambda^{\pm \tau_{\lambda'}})=\mp u\lambda'' \mbox{\ and\ } (\pm u\lambda)^{\pm \lambda'\tau_{\lambda'}}=\pm u(\lambda^{\pm \lambda'\tau_{\lambda'}})=\pm u\lambda''.\]
We have therefore confirmed the claim.

Let $ H=\l u,i,j,k\r\leqslant K $. Then, $ H\cong\ZZ_4\circ\Q_8 $ is normal in $ K $, $ K/H\cong \S_3 $, and the involutions contained in $ H $ lie in $ \{-1\}\cup\{ \pm u\lambda \} $. Now, $ (x,y,z) $ is a regular triple for $ G $ such that $ \l x,z\r=\ZZ_2^2 $, so $ \l x,z\r\cap H >1 $, that is, \[ \{x,z,xz\}\cap (\{-1\}\cup\{ \pm u\lambda \})\not=\emptyset.\]
By the above claim, one of $x,z,xz$ has to be $-1$. It follows that $\ov K=K/\l-1\r=\l \ov x,\ov y\r$ or $\l\ov y,\ov z\r$ is dihedral, and this contradicts $\ov K\cong\ZZ_2\times(\ZZ_2^2{:}\S_3)=\ZZ_2\times\S_4$.
\end{proof}

\begin{lemma}\label{not-332}
	Let $ G=(\ZZ_{3^\ell}\times\ZZ_3){:}\ZZ_2=\l a,b\r{:}\l c\r $, with $ a^c=a^{-1} $, $ b^c=b^{-1} $. Then $ G $ has no rotary pair.
\end{lemma}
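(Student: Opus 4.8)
The plan is to use the fact that $G$ is the generalised dihedral group over the abelian $3$-group $N:=\l a,b\r\cong\ZZ_{3^\ell}\times\ZZ_3$, which is not cyclic (for the relevant range $\ell\ge1$, since $\gcd(3^\ell,3)=3$). First I would record two elementary facts. Since $N$ is abelian and $c$ inverts each of the generators $a,b$, the element $c$ inverts every $w\in N$; hence every subgroup of $N$ is normalised by $\l c\r$ and so is normal in $G$, and for each $w\in N$ the element $wc\in G\setminus N$ satisfies $(wc)^2=w(w^{-1})=1$. Thus every element of $G\setminus N$ is an involution, whereas $N$, being a $3$-group, has no involution; so the involutions of $G$ are precisely the elements of $G\setminus N$.

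Now assume for contradiction that $(\alpha,z)$ is a rotary pair for $G$, i.e.\ $G=\l\alpha,z\r$ with $|z|=2$. Then $z\in G\setminus N$, so write $z=wc$. I would split into two cases according to whether $\alpha\in N$. If $\alpha\in N$: a one-line computation (using $z=wc$, that $c$ inverts $N$, that $|c|=2$, and that $w$ and $\alpha$ commute in the abelian group $N$) gives $z\alpha z=\alpha^{-1}$, so $\l\alpha,z\r=\l\alpha\r{:}\l z\r$ with $\l\alpha\r$ a cyclic subgroup of $N$; since $N$ is non-cyclic this is a proper subgroup of $G$ --- contradiction. If $\alpha\in G\setminus N$: then $\alpha$ is an involution, $\alpha z\in N$, and $z$ inverts $\alpha z$, so $\l\alpha,z\r=\l\alpha z\r{:}\l z\r$ with $\l\alpha z\r$ again a cyclic subgroup of $N$; hence $\l\alpha,z\r$ is again proper --- contradiction. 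Therefore $G$ has no rotary pair.

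I do not anticipate a real obstacle here: the argument is purely structural, and the only computations are the squaring identity $(wc)^2=1$ and the conjugation identity $z\alpha z=\alpha^{-1}$ (equivalently $z(\alpha z)z=(\alpha z)^{-1}$), both immediate from $c$ inverting $N$ together with $|c|=2$. The one point to keep straight is that in \emph{both} cases $\l\alpha,z\r$ is a dihedral group over a \emph{cyclic} subgroup of $N$, and hence has order at most $2\cdot 3^\ell<2\cdot 3^{\ell+1}=|G|$; the strict inequality is precisely the assertion that $\ZZ_{3^\ell}\times\ZZ_3$ is not cyclic. If preferred, this order count can replace the case-by-case identification of $\l\alpha,z\r$ and makes the contradiction transparent.
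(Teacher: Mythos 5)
Your proof is correct and follows essentially the same route as the paper: both arguments reduce to the observation that a rotary pair would force $G=\l\a\r{:}\l z\r$ to be dihedral over a cyclic subgroup of the non-cyclic abelian group $\l a,b\r$, which is too small. Your version merely spells out the case $\a\notin\l a,b\r$ explicitly (the paper dismisses it with ``$G$ is not dihedral'') and replaces the paper's quotient $G/\l\a\r\cong\ZZ_2$ contradiction with a direct order count.
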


\begin{proof}
	Let $ (\a,z) $ be a rotary pair for $ G $. Then $ G=\l \a,z\r $ and $ |z|=2 $. Since $ G $ is not dihedral, $ |\a|\ne 2 $, so $ \a\in\l a,b\r $ and $ \l\a\r $ is normal in $ G $. It follows that $ \ov G=G/\l\a\r=\l\ov z\r=\ZZ_2 $, not possible.
\end{proof}

Now, we are ready to prove Theorem~\ref{thm:arctransmap-gps}.

{\bf Proof of Theorem~\ref{thm:arctransmap-gps}\,:}

Let $\calM$ be a map with square-free Euler characteristic, and let $G\le\Aut\calM$ be a solvable arc-transitive automorphism group of $ \calM $.
By Lemmas~\ref{Sylow-metac} and \ref{class-closed}, the group $ G $ satisfies Hypothesis \ref{hypo-0}, so by Theorem~\ref{thm:sfEul-gps}, we have\,: \[G=H{:}K=(A{:}B){:}K,\] where $A$ is abelian, $ B $ is nilpotent and $ A\cap\Z(A{:}B)=1 $. We thus only need to present $ K $. \vs

Recall that $ \calM $ as a $ G $-arc-transitive map satisfies one of the following properties\,:
$ G $-regular\,; $ G $-vertex-reversing\,; $ G $-vertex-rotary\,; respectively, in each
case, the group $ G $ has\,: a regular triple\,; a reversing triple\,; a rotary pair. In particular, $ G $ is of even order, so $ K \ne 1 $.\vs

(a) Suppose $K=G_2$. By Lemma \ref{lem:arctrans}, either $ K $ has a regular triple/reversing triple/rotary pair as $ G $ does, and then $ K $ is one of the 2-groups given in Proposition \ref{2-gp-arc}; or $ K $ is $ \ZZ_2 $, dihedral $ \D_{2^\ell} $ or cyclic $ \ZZ_{2^\ell} $ ($ \ell\ge 2 $), while $ \D_{2^\ell} $ and $ \ZZ_{2^\ell} $ already appear in Proposition \ref{2-gp-arc}.

This give the Theorem \ref{thm:arctransmap-gps}~(1).\vs

(b) Suppose that $ K $ is one of the groups given in Theorem \ref{thm:sfEul-gps} (ii) and (iv)\,:
\[\ZZ_2^2{:}G_3,\ \ZZ_2\times(\ZZ_2^2{:}G_3),\ \Q_8{:}G_3,\ \ZZ_4\circ_{\ZZ_2}(\Q_8{:}G_3),\ \ZZ_2^3{:}G_7,\ \ZZ_2^3{:}(G_7{:}G_3).\] Then, $ G $ has an odd factor group $ G_3 $, $ G_7 $ or $ G_7{:}G_3 $. By Lemma \ref{lem:arctrans}, $ G $ has neither regular triples nor reversing triples, so $ G $ has a rotary pair, with each odd factor group cyclic, namely, $ G_3=\ZZ_{3^\ell} $, $ G_7=\ZZ_{7^\ell} $ and $ K\ne\ZZ_2^3{:}(G_7{:}G_3) $. Thus, $ K $ is one of\,: \[\mbox{$\ZZ_{2}^2{:}\ZZ_{3^\ell}$,\, $ \ZZ_2\times(\ZZ_{2}^2{:}\ZZ_{3^\ell} )$,\, $\ZZ_4\circ_{\ZZ_2}(\Q_8{:}\ZZ_{3^\ell})$, \,$ \ZZ_2^3{:}\ZZ_{7^\ell}, $}\] where $\Q_8{:}\ZZ_{3^\ell} $ is excluded, because it has the only involution lying in the center, and consequently, it does not have a rotary pair.

This gives the first half of Theorem \ref{thm:arctransmap-gps}~(4).\vs

(c) At last, suppose that $K$ is one of the groups given in Theorem \ref{thm:sfEul-gps} (iii)\,: $ X $, $ \ZZ_2\times X $, $ Y $, $ \ZZ_4\circ_{\ZZ_2} Y $ and $ W $, where $ X,Y,W $ are given in {\rm Table \ref{tab-main}}.

If $ \calM $ is $ G $-vertex-reversing, then $ G $ has a reversing triple, with each factor group of even order by Lemma \ref{lem:arctrans}. Note that, the groups $ X,Y $ located in rows $2,3,6$ of Table \ref{tab-main} have an odd factor group $ \ZZ_3 $ or $ \ZZ_{3^\ell} $, so does the groups $ W $ located in rows $1,2,5$. Thus, $ X,Y,W $ can only taken from the others, namely,

$X\in\{\ZZ_2^2{:}\D_{2.3^{\ell}},\,(\ZZ_{3^{\ell}}\times \A_4){:}\ZZ_2,\,((\ZZ_2^2{:}\ZZ_{3^{\ell}})\times\ZZ_{3}){:}\ZZ_2\},$

$Y\in\{ \Q_8{:}\D_{2.3^{\ell}},\, (\ZZ_{3^{\ell}}\times (\Q_8{:}\ZZ_3)){:}\ZZ_2,\, (\ZZ_{3}\times (\Q_8{:}\ZZ_{3^{\ell}})){:}\ZZ_2\}$, and

$W\in\{\D_{2.3^{\ell}}\times\S_4,\, (\ZZ_2^2{:}\D_{2.3^{\ell}})\times\D_6,\, (\ZZ_{3^{\ell}}{:}\ZZ_4)\circ(\Q_8{:}\S_3),\,
(\Q_8{:}\D_{2.3^{\ell}})\circ(\ZZ_3{:}\ZZ_4)\}$.\vs

If $ \calM $ is $ G $-regular, then $ G $ has a regular triple, so it has a reversing triple, and $ K $ is one of the above groups. Furthermore, by Lemma \ref{not-Q:S3}, the groups which have $\GL_2(3)$ or $ \ZZ_4\circ \GL_2(3)$ as a factor group should be excluded. Hence, $ G\ne Y $, $ \ZZ_4\circ_{\ZZ_2} Y $, $(\ZZ_{3^{\ell}}{:}\ZZ_4)\circ(\Q_8{:}\S_3)$ or $(\Q_8{:}\D_{2.3^{\ell}})\circ(\ZZ_3{:}\ZZ_4)$.

This gives Theorem \ref{thm:arctransmap-gps}~(2) and (3).\vs

(c.3) If $ \calM $ is $ G $-vertex-rotary, then $ G $ has a rotary pair, so by Lemma \ref{not-332}, the following four groups, which have $ (\ZZ_{3^\ell}\times\ZZ_3){:}\ZZ_2 $ as a factor group, should be excluded\,:
\[\mbox{$(\ZZ_{3^{\ell}}\times \A_4){:}\ZZ_2$,\, $(\ZZ_{3}\times (\ZZ_2^2{:}\ZZ_{3^{\ell}})){:}\ZZ_2$,\, $(\ZZ_{3^{\ell}}\times (\Q_8{:}\ZZ_3)){:}\ZZ_2$,\, $(\ZZ_{3}\times (\Q_8{:}\ZZ_{3^{\ell}})){:}\ZZ_2$.}\]

Finally, this completes Theorem \ref{thm:arctransmap-gps}~(4).\qed

\section*{Declarations}
\begin{center}
	{Funding and/or Conflicts of interests/Competing interests}
\end{center}

The author(s) declares that there is no any financial or personal
relationship with other people or organizations not mentioned that can
inappropriately influence the work.

\end{document}